\newtheorem{theorem}{Theorem}[section]
\newtheorem{lemma}[theorem]{Lemma}
\newtheorem{corollary}[theorem]{Corollary}
\theoremstyle{definition}
\newtheorem{definition}[theorem]{Definition}
\theoremstyle{remark}
\newtheorem{remark}[theorem]{Remark}
\numberwithin{equation}{section}
\newcommand{\V}{{\mathcal V}} 
\newcommand{\U}{{\mathcal U}} 
\newcommand{\kx}{{\mathcal K(X)}}
\newcommand{\kz}{{\mathcal K(Z)}}
\newcommand{\cpx}{{C_{p}(X)}} 
\newcommand{\qpx}{{Q_{p}(X)}} 
\newcommand{\qcx}{{Q_{C}(X)}} 
\newcommand{\R}{{\mathbb R}} 
\newcommand{\N}{{\mathbb N}}
\newcommand{\alephnull}{{\aleph_{0}}} 
\newcommand{\cx}{{C(X)}} 
\newcommand{\qx}{{Q(X)}} 
\begin{document}
\setcounter{page}{1}
\title[Cardinal Properties of the Space $\qcx$]{Cardinal Properties of the Space of Quasicontinuous Functions under Topology of Uniform Convergence on Compact Subsets}
\author[Chander Mohan Bishnoi*]{Chander Mohan Bishnoi}

\address{Chander Mohan Bishnoi\\
Department of Mathematics\\
Lovely Professional University, Punjab, India}
\email{chandermohan.cm.b@gmail.com}
\thanks{}
\author{Sanjay Mishra}
\address{Sanjay Mishra\\
Department of Mathematics\\
Amity School of Applied Sciences  \\
Amity University, Lucknow Campus, Uttar Pradesh, India}
\email{drsmishraresearch.com}
\subjclass[2020]{Primary 54C35; Secondary 54A25, 54C08, 54C30, 54D10}
\keywords{Quasicontinuous functions, topology of uniform convergence on compacta, tightness, density tightness, fan tightness, and Frechet-Urysohn space.}
\date{Received: xxxxxx; Revised: yyyyyy; Accepted: zzzzzz.
\newline \indent $^{*}$Corresponding author}
\begin{abstract}
In this paper, we investigate various cardinal properties of the space $Q_{C}X$ of all real-valued quasicontinuous functions on the topological space $X$, under the topology of uniform convergence on compact subsets. It begins by examining the relationship between tightness and other properties in the context of the space $X$, highlighting results such as the alignment of tightness $Q_{C}X$ with the compact Lindelöf number of $X$ under Hausdorff conditions and the countable tightness of $Q_{C}X$ when $X$ is second countable. Further investigations reveal conditions for the tightness of $Q_{C}X$ relative to $k$-covers of $X$, as well as connections between density tightness, fan tightness, and other properties in Hausdorff spaces. Additionally, we discuss the implications of the Frechet-Urysohn property $Q_{C}X$ for open $k$-covers in Hausdorff spaces. We explore relationships between $Q_{C}X$'s tightness, the Frechet-Urysohn property, and the $\sigma$-compactness of locally compact Hausdorff spaces $X$. Furthermore, we examine the $k_{f}$-covering property and the existence of $k$-covers in the context of Whyburn spaces.
\end{abstract} \maketitle

\section{Introduction}\label{s:Introduction}
Before we go deeper into the content of this article, it is necessary to first look at the essential symbols and explanations that will be adopted. The notation to be used consists of $X$, $Y$, and $Z$, which should be considered as topological spaces unless clearly stated differently. The set $\qx$ represents collections of quasicontinuous real-valued functions on the space $X$, the  $\qcx$ is the space of the set $\qx$ endowed with the topology of uniform convergence on a compact set,  the set of natural numbers, real numbers, and rational numbers are $\N, \R$ and  $\mathbb{Q}$, respectively. $|A|$ is the cardinality of $A$, $\alephnull$ is a countable cardinal number (first infinite cardinal number), and $\kappa$, $\lambda$  represents any cardinal number.  Also, $\kz$ stands for the collection of all compact subsets of space $Z$, a closure of set $A$  is $\overline{A}$ and the difference of two set $X$, $A$ is $X/A$.
Baire derived the condition of quasicontinuous functions in his article \cite{Baire1899} while studying the points of continuity for separately continuous functions from $\R^{2}$ to $\R$. Kempisty extensively investigated the quasicontinuity suggested in his paper \cite{Kempisty1932} for real functions of several variables. When we examine deeper, we notice that the researchers were interested in this study for various reasons.  The two most important of these are: The first reason is it provides a relatively good connection to continuity while being a more general concept. This relationship allows mathematicians to extend concepts and theorems from continuous functions to a broader class of functions. The second reason is that it has deep connections to mathematical analysis and topology. Quasicontinuous functions were significant in the studies of the characterization of minimal cusco, minimal usco maps, topological groups, and the CHART group, which is the basic object for studying topological dynamics. More information on using quasicontinuous functions can be found in [\cite{Hola2021}, Chapter 2] and a recent study of quasicontinuous functions found in \cite{Bishnoi2023}.

 Let us take a look at how the study of topological function space began, with functions considered as quasicontinuous functions. Hola and Holy investigated the features of the space of quasicontinuous functions under different topologies in the literature. \cite{Hola2016, Hola2017, Hola2018, Hola2020, Kumar2022}. Similarly, the space of continuous functions under different topologies is studied in \cite{Aaliya2023, Arkhangelskii992, Tkachuk2010, Tkachuk2014, Tkachuk2015, Mishra2023, Aaliya2024, Aaliya2023some}. Recently, in 2021 \cite{hola2021quasicontinuouscompact} Hola and Holy studied the space $\qcx$ in which topology is defined by using uniformity. They studied some cardinal functions for the space $\qcx$. Furthermore, in continuation, we investigated more interesting cardinal functions such as density, tightness, and fan tightness of $\qcx$. Moreover, we derived some equivalent conditions for the Frechet-Uryshon property of $\qcx$.

 In section \eqref{s:Pre-requisites}, we revisit some important concepts and conclusions to help understand our main work.
In section \eqref{s:Various types of tightness of qcx}, we present several types of results referring to the relationship between various topological properties of space. Firstly, under the condition of Hausdorffness of the space $X$, the tightness of the $\qcx$ coincides with the compact Lindelöf number of $X$, and under the condition of second countability of $X$, $\qcx$ has countable tightness. Additionally, the necessary and sufficient condition for the tightness of $\qcx$ to be less than $\kappa$ in terms of $k$-covers of $X$ is determined. Furthermore, under the condition of Hausdorffness of $X$, the density tightness and tightness of $\qcx$ coincide, and necessary and sufficient conditions for the countable infinite of the fan tightness of $\qcx$ under the Hausdorffness condition of $X$ are found. Moreover, conditions under which $\qcx$ has countable strong fan tightness are established. Finally, an inequality between the density of $\qcx$ and the cofinality of $X$ is established when the space $X$ is regular. 

In section \eqref{s:Frechet-Urysohn Property of qcx}, we demonstrate that for a Hausdorff space, an open $k$-cover possesses a countable $k$-subcover if it $\qcx$  is a Frechet-Urysohn space. Additionally, we establish the relationship between the $\sigma$-compactness of the locally compact Hausdorff space $X$, the tightness of $\qcx$, and the Frechet-Urysohn property of $\qcx$. The $k_{f}$-covering property of $X$ is explored under the condition that  $\qcx$ is a Frechet-Urysohn space and studies the existence of a $k$-cover for $X$ in the context of  $\qcx$ as a Whyburn space. These investigations provide valuable insights into the topological structures of spaces and elucidating connections between various fundamental properties.

\section{Preliminaries}\label{s:Pre-requisites}
According to Neubrunn in  \cite{Neubrunn1988}, ``a function $f \colon X \to Y$ is known as quasicontinuous at $x \in X$ if there exists a non-empty open subset $C$ of $A$ such that $f(C)\subset B$, where  $A$ and $B$ are any open sets in $X$ and $Y$ respectively with $x \in A$, $f(x) \in B$". If for all $x \in X$, $f$ is quasicontinuous, then it is quasicontinuous on $X$. Now, let us recall certain definitions related to our main work.
For any space $Z$, the density  and $k$-cofonality are defined as
\[d(Z) = \alephnull + \min\{|T|\colon \overline{T} = Z\}, \text{and}\]
\[kcof(Z) = \alephnull + \{|\mathcal{C}|\colon \mathcal{C} \, \text{is a cofinal family of} \, \kz\}\]

The tightness and fan tightness of a point $z \in Z$ are defined as:\\
$t(z,Z) = \alephnull + \min\{\kappa \colon \text{for each  $T\subset Z$, if}\,  z \in \overline{T},\text{There exists}\, B\subset T\,\text{such that}\, z \in \overline{B}\,\text{and}\, |B|\leq \kappa\}$
and \\ $ft(z,Z) = \alephnull + \min\{\kappa \colon  \text{for each}\, \{T_{n}\} \text{sequence  of  subsets} \, Z \, \text{and} \, z \in \cap_{n\in\mathbb{N}}\overline{T_{n}}, \text{there exists}\, B_{n} \subset T_{n} \, \text{such that}\, z\in\overline{\cup_{n\in\mathbb{N}}A_{n}}, \text{with}\, |B_{n}|\leq \kappa\}$
 The tightness and fan tightness of space $Z$ are $t(Z) = \sup\limits_{z\in Z}\{t(z,Z)\}$  and $ft(Z) = \sup\limits_{z\in Z}\{ft(z,Z)\}$, respectively. Furthermore, for a space $Z$ if for every $z\in Z$,  $\{T_{n}\}_{n\in \N}$ is a sequence of subsets of $Z$ with $z\in \bigcap_{n\in \N} \overline{T_{n}}$, there exists $z_{n}\in T_{n}$ satisfying $z\in \overline{\{z_{n}\colon n\in \N\}}$, then $Z$ has countable strong fan tightness.

 A space $Z$ is said to be
\begin{enumerate}
\item $\sigma$-compact \cite{engelking1989}, if $Z=\bigcup\limits_{n\in\N}C_{n}$, where $C_{n}$ is a compact subset of $Z$.
\item Hemicompact \cite{engelking1989}, if any collection $\mathcal{C}\subset \kz$ has subset ordering, then exists a countable cofinal subfamily.
\item Frechet-Urysohn \cite{Arkhangelskii992}, if for any $z\in Z$ and $S\subset Z$ with $z\in \overline{S}$, there exists a sequence $\{z_{n}\}_{n\in\N}$ in $S$ converges to $z$.
\item Whyburn \cite{osipov2023} if for any subset $S$ of $Z$ and any $z \in \overline{S}/ S$ there is a set $P \subset S$ such that $\overline{P} = S \cup \{z\}$.
\end{enumerate}
\begin{remark}\cite{engelking1989}
Hemicompactness implies space $\sigma$-compact, but not vice versa.
\end{remark}

\begin{definition}\cite{Okunev2002}
A subset $T$ of the space $Z$ is $\kappa$-dense in $Z$ if $[T]_{\kappa}=Z$, where $[T]_{\kappa}$  denotes the $\kappa$-closure of set $T$ and it is defined as
\begin{equation*}\label{eq:k-closure}
[T]_{\kappa} = \bigcup\{|\overline{Y}|\colon Y \, \text{is subset of} \, T \, \text{and} \, |Y| \leq \kappa \}
\end{equation*}
\end{definition}

\begin{definition}\cite{Okunev2002}
The density tightness of a space $Z$ is denoted by $dt(Z)$ and defined as:
\begin{equation*}\label{eq:Density tightness of X}
dt(Z) = \min\{\kappa \colon \text{Each dense subset is $\kappa$-dense in} \, Z \}
\end{equation*}
\end{definition}
\begin{proof}[\cite{Okunev2002}, Prop. 2.2]\label{t:Density tightness and tightness relation}
For any space $Z$, then $dt(Z)\leq t(Z)$.
\end{proof}
\begin{lemma}\cite{Hola2020}\label{l:condition quasi}
Let $Z$ be a Hausdorff space, and let $g \colon Z \to \mathbb{R}$ be a function such that, for each $z \in Z$, there exists an open set $U$ in $Z$ containing $z$ where $g(y) = g(z), \, \forall \, y \in U$. Then $g$ is quasicontinuous.
\end{lemma}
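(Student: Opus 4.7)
The plan is to unwind the definition of quasicontinuity at a point and observe that the hypothesis gives, for free, a neighborhood on which $g$ is constant — which is far more than quasicontinuity requires. So the work is essentially just matching symbols with the definition stated after Neubrunn's citation above.

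Fix an arbitrary $z \in Z$. To show $g$ is quasicontinuous at $z$, let $A$ be any open set in $Z$ with $z \in A$ and let $B$ be any open set in $\mathbb{R}$ with $g(z) \in B$. I must produce a non-empty open $C \subseteq A$ with $g(C) \subseteq B$. By the hypothesis of the lemma, there is an open set $U \ni z$ such that $g(y) = g(z)$ for every $y \in U$. The natural candidate is then $C := A \cap U$: it is open, it contains $z$ (hence is non-empty), it lies inside $A$, and $g(C) = \{g(z)\} \subseteq B$. Since $z$ was arbitrary, $g$ is quasicontinuous on $Z$.

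There is no real obstacle; the statement is really saying that a locally constant function is quasicontinuous, and in fact the same argument shows it is continuous. Notably, the Hausdorff assumption on $Z$ is not used anywhere in the argument I just outlined — the reason it appears in the hypothesis is presumably that the lemma will be invoked in the Hausdorff setting throughout the paper, not because the implication requires separation. I would therefore present the proof as a two-sentence verification: pick $U$ from the hypothesis, take $C = A \cap U$, and read off $g(C) = \{g(z)\} \subseteq B$.
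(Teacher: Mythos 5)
Your argument is correct and complete: with the definition of quasicontinuity recorded after Neubrunn's citation, taking $C = A \cap U$ gives a non-empty open subset of $A$ mapped into $B$, and you are right both that the Hausdorff hypothesis is never used and that the argument actually shows a locally constant function is continuous, which is stronger than quasicontinuity. Note that the paper itself supplies no proof of this lemma --- it is imported as a quoted result from \cite{Hola2020} --- so there is nothing internal to compare against; your two-line verification stands on its own as a proof of the statement as written.
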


\begin{lemma}\cite{Kumar2022}\label{l:quasicontinuous function separating point and closed set}
If $Z$ is a regular space and $Y$ is any space. For any  $z\in Z$, a non-empty closed subset $C$ of $Z$ not containing $z$ and $c_{1}, c_{2}\in C$, there exists a quasicontinuous function $f\colon Z \to Y$ satisfying  $f(x) = c_{1}$ and $f(C) = \{c_{2}\}$.
\end{lemma}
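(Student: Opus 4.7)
The plan is to separate $z$ from $C$ using the regularity of $Z$, then define $f$ as a two-valued step function which takes the value $c_1$ near $z$ and $c_2$ away from $z$. Since $C$ is closed and $z\notin C$, the set $Z\setminus C$ is an open neighborhood of $z$, so by regularity I can pick an open set $U$ with $z\in U\subset\overline{U}\subset Z\setminus C$. Define $f\colon Z\to Y$ by
\[
f(x)=\begin{cases} c_{1} & \text{if } x\in \overline{U},\\ c_{2} & \text{if } x\in Z\setminus \overline{U}. \end{cases}
\]
Then $f(z)=c_{1}$ (reading the statement's $f(x)=c_{1}$ as $f(z)=c_{1}$), and $f(C)=\{c_{2}\}$ since $C\subset Z\setminus\overline{U}$.

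To verify quasicontinuity, I would check the definition at an arbitrary $x\in Z$ by splitting into three cases according to whether $x$ lies in $U$, in $Z\setminus\overline{U}$, or on the boundary $\overline{U}\setminus U$. In the first two cases $f$ is locally constant on an open neighborhood of $x$, so quasicontinuity at $x$ is immediate (one can invoke the $Y$-valued analogue of Lemma \ref{l:condition quasi} or just apply the definition directly: any open $B\ni f(x)$ works with $C'$ taken to be that neighborhood intersected with the given open $A\ni x$).

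The subtle case, and the one I expect to be the main obstacle, is $x\in\overline{U}\setminus U$, where $f(x)=c_{1}$. Given an open $A\ni x$ and an open $B\ni c_{1}$, I need to exhibit a non-empty open $C'\subset A$ with $f(C')\subset B$. The point is that $x\in\overline{U}$ forces $A\cap U\neq\emptyset$; taking $C':=A\cap U$ gives an open subset of $A$ on which $f$ is constantly $c_{1}\in B$, settling quasicontinuity at boundary points. (Note that the symmetric difficulty does not arise on the other side because points of $Z\setminus\overline{U}$ already sit in an open set on which $f\equiv c_{2}$.) The only place regularity is actually used is in the very first step, to guarantee that the separating open set $U$ can be chosen with $\overline{U}$ still disjoint from $C$; this is what prevents the boundary analysis from breaking down and is why mere Hausdorffness would not suffice for the construction.
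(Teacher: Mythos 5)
Your construction is correct, and it isolates exactly the right subtlety: the two-valued function must take the value $c_{1}$ on the \emph{closed} piece $\overline{U}$ so that at a boundary point $x\in\overline{U}\setminus U$ the required non-empty open set can be taken as $A\cap U$ (non-empty precisely because $x\in\overline{U}$), while the $c_{2}$-piece $Z\setminus\overline{U}$ is open and hence unproblematic; regularity is used only to push $\overline{U}$ off $C$. Note that the paper itself gives no proof of this lemma --- it is imported verbatim from \cite{Kumar2022} --- so there is nothing internal to compare against; your argument is the standard one and, as you implicitly did, one must read the statement's evident typos ($c_{1},c_{2}\in C$ and $f(x)=c_{1}$) as $y_{1},y_{2}\in Y$ and $f(z)=y_{1}$ for the lemma to make sense. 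The only cosmetic caution is your appeal to ``the $Y$-valued analogue of Lemma \ref{l:condition quasi}'' in the locally constant cases: that lemma as printed assumes Hausdorffness and real values, so it is cleaner to argue directly from Neubrunn's definition there, as you also indicate.
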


The topology of uniform convergence on  compact set on the set $\qx$ in  
 \cite{hola2021quasicontinuouscompact}. This topology is induced by the uniformity as `` the basis elements for the uniformity is $W(K,\epsilon) = \{(f,g)\colon f,g\in \qx\,\text{and}\, |f(x)-g(x)| < \epsilon, \, \forall \, x \in K\}$, where $K \in \kx$ is set of all compact subsets of $X$ and $\epsilon > 0$. The basis neighborhood of $f \in \qx$ for the topology on $\qx$ is $W(f, K, \epsilon) = \{g \in \qx \colon\, \text{and}\, |f(x)-g(x)|< \epsilon, \, \forall\, x \in K\}$".
\section{Tightness properties of the space $\qcx$}\label{s:Various types of tightness of qcx}
In this section,  we studied how the cardinality of covers for compact subsets of space $X$ is used to analyze the tightness of $\qcx$, we define that cover as $k$-covers,  and its cardinality is known as the compact Lindelöf number. Which measures the size of the smallest open cover needed to cover the entire collection of compact subsets of a topological space. It provides information about the ``coverage efficiency” of open covers in a topological space.
A smaller compact  Lindelöf number indicates that the space can be covered by a relatively small number of open sets, while a larger compact  Lindelöf number suggests that more open sets are needed for a complete cover. Also, the tightness of a topological space measures the degree to which the closed sets can be separated from each other by open sets. It provides insights into the ``closeness” of closed sets in the space and helps classify and compare different topological spaces based on their separation properties. Further, we characterize the fan tightness of $\qcx$ using the k-covering properties of $X$. Furthermore, we examine the condition of coincidence of density, tightness, and the tightness of $\qcx$ and how the dominating density for $\qcx$ is the $k$-cofinality of $X$.

\begin{definition}\label{d:k-cover}
A collection $\V$ containing subsets of a space $X$ is said to be a $k$-cover of $X$ if, for every  $C \in \kx$,  there exists $V \in \V$ which contains $C$. If every element of $\V$ is open, then it is called an open $k$-cover.
\end{definition}

\begin{definition}\label{d:compact lindelof number}
For any space $X$, the compact Lindelöf number is denoted and defined by
\begin{equation*}\label{eq:Lindelöf number of X}
kL(X) = \alephnull + \inf\{\lambda\colon \text{For each  open $k$-cover $\U$ of}\, X, \, \text{There exists}\, \text{$k$-subcover} \, \mathcal{W}\, \text{with} \,|\mathcal{W}|\leq \lambda\}
\end{equation*}
\end{definition}
\begin{theorem}\label{t:Compact lindeloff no. condition on k cover}
Let $X$ be any space, then $kL(X)\leq \lambda$ if and only if for each open $k$-cover $\V$ of  $X$, there exists a $k$-subcover ${\V}'$ of $X$ with $|{\V}'|\leq \lambda$.
\end{theorem}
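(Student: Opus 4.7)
The plan is to prove this equivalence by essentially unpacking the definition of $kL(X)$, with the only nontrivial step being the use of the well-ordering of cardinals to turn the infimum in Definition \ref{d:compact lindelof number} into an attained minimum.

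First I would introduce the auxiliary set
\[
S = \{\mu : \text{every open $k$-cover of $X$ admits a $k$-subcover of cardinality} \leq \mu\},
\]
so that by definition $kL(X) = \alephnull + \inf S$. The key structural observation is that $S$ is upward closed in the class of cardinals: if $\mu \in S$ and $\mu \leq \mu'$, then any $k$-subcover witnessing $\mu \in S$ also has cardinality $\leq \mu'$, hence $\mu' \in S$. Since the class of cardinals is well-ordered, $S$ (if nonempty) has a least element, so $\inf S = \min S$; in particular the infimum is attained.

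For the ``if'' direction, suppose every open $k$-cover of $X$ has a $k$-subcover of cardinality $\leq \lambda$; then $\lambda \in S$, hence $\inf S \leq \lambda$, and therefore $kL(X) = \alephnull + \inf S \leq \alephnull + \lambda = \lambda$ (under the standing convention $\lambda \geq \alephnull$ that is implicit throughout the paper for cardinal invariants). Conversely, if $kL(X) \leq \lambda$, then $\inf S \leq \lambda$, and by the attainment of the infimum we have $\min S \leq \lambda$; upward closure of $S$ then forces $\lambda \in S$, which is exactly the statement that every open $k$-cover of $X$ admits a $k$-subcover of cardinality at most $\lambda$.

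The proof thus contains no real topological content; it is a definitional unwinding. The only point to be careful about is the standard cardinal-arithmetic identity $\alephnull + \lambda = \lambda$ for infinite $\lambda$, together with the well-ordering argument that guarantees the infimum in the definition of $kL(X)$ is realized. No serious obstacle is anticipated.
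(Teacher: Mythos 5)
Your proof is correct and follows essentially the same route as the paper, which simply unwinds Definition \ref{d:compact lindelof number} in both directions. Your version is in fact more careful than the paper's one-line argument: the observation that the set $S$ is upward closed and that the infimum is attained (by well-ordering of cardinals) is exactly what is needed to justify the edge case $\lambda = \inf S$, a point the paper passes over silently.
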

\begin{proof}
Consider that  for every open $k$-cover $\V$ of $X$, there  exists a $k$-subcover ${\V}'$ of $X$ having cardinality less then or equal to $ \lambda$. Then, simply by the definition of compact Lindelof number, $kL(X)\leq \lambda$. Conversely, if $kL(X)\leq \lambda$ and $\V$ is any  open  $k$-cover space $X$. Thus, by the definition of compact Lindelof number, there is a $k$-subcover ${\V}'$ of $X$ having cardinality $\leq \lambda$.
\end{proof}

\begin{theorem}\label{t:Compact lindelof no. of x and tightnessof qcx}
For any  Hausdorffspace$X$, then $t(\qcx)=kL(X)$.
\end{theorem}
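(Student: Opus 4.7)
The plan is to establish the two inequalities $kL(X)\le t(\qcx)$ and $t(\qcx)\le kL(X)$ separately, modelled on the classical result $t(\ccx)=kL(X)$ for Tychonoff $X$ in the continuous setting, but adapted for quasicontinuous functions by leaning on Lemmas \ref{l:condition quasi} and \ref{l:quasicontinuous function separating point and closed set}.

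For $kL(X)\le t(\qcx)$, set $\kappa = t(\qcx)$ and start with an arbitrary open $k$-cover $\U$ of $X$. The aim is to attach to each $V\in\U$ a quasicontinuous function $f_V:X\to\R$ with $f_V|_V\equiv 0$ and $f_V\ge 1$ on $X\setminus V$, so that (i) each basic neighbourhood $W(0,K,\epsilon)$ of the zero function catches $f_V$ for every $V\in\U$ containing $K$, and (ii) the condition $|f_V|<1$ on a compact $K$ forces $K\subseteq V$. I would build $f_V$ by prescribing it to be zero on $V$ and at least one on $X\setminus V$, with the value on $\partial V$ chosen so that Lemma \ref{l:condition quasi} (or a direct verification through the quasicontinuity definition) applies; the Hausdorff hypothesis is used to analyse $\overline V$, $\mathrm{int}(\overline V)$ and the pointwise separation needed at the boundary. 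Once the $f_V$'s are in hand, set $S=\{f_V : V\in\U\}\subseteq\qcx$; by construction $0\in\overline{S}$. Tightness $t(\qcx)\le\kappa$ then yields $S'\subseteq S$ with $|S'|\le\kappa$ and $0\in\overline{S'}$, and the subfamily $\U'=\{V : f_V\in S'\}\subseteq\U$ is a $k$-subcover of size $\le\kappa$ by (ii).

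For $t(\qcx)\le kL(X)$, let $\lambda=kL(X)$, fix $T\subseteq\qcx$ and $f\in\overline{T}$, and reduce to $f=0$ by translation (using that pointwise addition makes $\qcx$ into a topological group). For each $g\in T$ and each $n\in\N$ set $V_{g,n}=\mathrm{int}\{x\in X : |g(x)|<1/n\}$, which is open. The quasicontinuity of $g$ (applied pointwise with $B=(-1/n,1/n)$) implies that any point where $|g|<1/n$ lies in $\overline{V_{g,n}}$, so every compact $K$ on which $|g|<1/n$ is contained in $\overline{V_{g,n}}$; combined with $0\in\overline{T}$ this shows that the family $\{V_{g,n}\}_{g\in T,\,n\in\N}$, after a mild enlargement by regular open hulls to convert closure-containment on compact sets into genuine containment, is an open $k$-cover of $X$. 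Apply $kL(X)\le\lambda$ to extract a $k$-subcover indexed by at most $\lambda$ pairs $(g,n)$, and let $T'\subseteq T$ be the $\le\lambda$ corresponding functions; the resulting $T'$ satisfies $0\in\overline{T'}$, proving $t(\qcx)\le\lambda$.

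The main obstacle in both directions is the quasicontinuous-versus-continuous gap: the level sets $\{|g|<1/n\}$ of a quasicontinuous $g$ need not be open, and there is no automatic Urysohn-style bump adapted to a prescribed open set. The hard step is therefore the boundary analysis at $\partial V$: producing a quasicontinuous $f_V$ whose support-like behaviour tracks $V$ well enough that ``$|f_V|<1$ on $K$'' translates back into ``$K\subseteq V$''. This is where Lemmas \ref{l:condition quasi} and \ref{l:quasicontinuous function separating point and closed set} and the Hausdorff hypothesis on $X$ do the essential work.
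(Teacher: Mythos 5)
Your first inequality follows the paper's construction almost exactly (the paper indexes the test functions by compact sets rather than by members of the cover, which is cosmetic), and it inherits the same unresolved difficulty — the one you flag as ``the hard step'' and then do not carry out. A function with $f_V=0$ on $V$ and $f_V\ge 1$ on all of $X\setminus V$ need not be quasicontinuous, and no choice of ``value on $\partial V$'' rescues it: for $X=\R$ and $V=\R\setminus\{0\}$, any $f_V$ vanishing on $V$ with $f_V(0)\ge 1$ fails quasicontinuity at $0$, since every nonempty open $C$ meets $V$ and hence $f_V(C)\not\subset(1/2,3/2)$. The paper sidesteps this by putting $f_K=0$ on $\overline{U_K}$ and $1$ on the open complement, which is genuinely quasicontinuous, but then $|f_{K'}|<1$ on $K$ only yields $K\subset\overline{U_{K'}}$ rather than $K\subset U_{K'}$. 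So the trade-off you are trying to negotiate at the boundary is real, and neither your appeal to Lemma \ref{l:condition quasi} nor to the Hausdorff hypothesis closes it; you would need either an explicit quasicontinuous $f_V$ with $\{x:|f_V(x)|<1\}\subset V$, or an argument upgrading a covering of compacta by closures $\overline{U}$, $U\in\U$, to a genuine $k$-subcover from $\U$.

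Your second inequality is a genuinely different, and structurally sounder, route than the paper's: you treat an arbitrary $T$ with $0\in\overline{T}$ and manufacture an open $k$-cover from it, which is what a tightness bound actually requires, whereas the paper only verifies the tightness condition for one specially constructed family $F$ built from a $k$-cover. But the execution has the mirror image of the same gap, and your proposed patch does not work. From $0\in\overline{T}$ you get, for each $K\in\kx$ and each $n$, some $g\in T$ with $K\subset\{x:|g(x)|<1/n\}$, and quasicontinuity then gives only $K\subset\overline{V_{g,n}}$ for $V_{g,n}=\mathrm{int}\{x:|g(x)|<1/n\}$. Passing to regular open hulls does not convert $K\subset\overline{V}$ into containment in an open set: take $K=\{0\}$ and $V=(0,1)$ in $\R$, where $\mathrm{int}\,\overline{V}=V$ and $K\not\subset V$. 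So $\{V_{g,n}\}$ has not been shown to be a $k$-cover and the extraction step has nothing to apply to. (There is also a smaller bookkeeping point: to recover $0\in\overline{T'}$ you must extract a $\le\lambda$ subcover for each fixed $n$ separately and take the union, since $K\subset V_{g,m}$ only controls $|g|$ below $1/m$, not below an arbitrary $\epsilon$.) Both halves thus hinge on the same containment-versus-closure issue, and until it is resolved neither inequality is established.
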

\begin{proof}
Suppose $t(\qcx)=\lambda$. Now, let $\U$ be any open $k$-cover of $X$. Thus, by Definition \eqref{d:k-cover}, for every $K\in \kx$, we have a $U_{K}\in\U$ such that $K\subset U_{K}$. Let $f_{K}\colon X \to \R$ be a function defined by $f_{K}(\overline U_{K})=\{0\}$ and $f_{K}(X/\overline U_{K})\subset \{1\}$, then by Lemma \eqref{l:condition quasi}, $f_{K}$ is quasicontinuous for each $K\in \kx$. Let $F = \{f_{K}\colon K\in \kx\}$ be a collection of quasicontinuous functions on $X$. Take $f_{0}$ to be a zero function on $X$, then $f_{0}\in \overline{F}$. Since $t(\qcx)=\lambda$, by the definition of tightness, there is  a subset $F'$ of $F$ with property $f_{0}\in \overline{F'}$ and $|F'|\leq \lambda$. Let us consider a subfamily  $\mathcal{W} = \{U_{K}\colon f_{K}\in F'\}$ of $\U$. Now we claim $W$ is a $k$-cover of $X$. Let $K\in \kx$ and $W(f_{0},K,1)$ as a neighborhood of $f_{0}$, then there is a $K'\in \kx$ such that $f_{K'} \in F'\cap W(f_{0}, K, 1)$. Thus, for $x \in K$, we have
\[f_{K'}(x) < 1, \, \text{if} \, x \in \overline{U_{K'}}, \, f_{K'}(x)=1,\,\text{otherwise}\,\]
This implies that $K\subset U_{K'}$, Therefore, $\mathcal{W}$  is a $k$-cover of $X$. Hence, $kL(X)\leq t(\qcx)$.

 Next, to prove $t(\qcx)\leq kL(X)$.  Suppose  $kL(X)= \lambda$. Let $\U$ be any open $k$-cover. By the definition of $kL$-number,   there is a subset ${\U}'$ of $\U$, which is $k$-subcover of $X$ with  $|{\U}'|\leq \lambda$.
 For any $f\in \qcx$, let us define a function $f_{U_{K}}\colon X \to \R$ with $f_{U_{K}}(x) = f(x)$ if $x \in  U_{K}$  and $f_{U_{K}}(x)=1$ if  $x\in X/U_{K}$. So, for each $U_{K} \in \U$,  by Lemma \eqref{l:condition quasi}, $f_{U_{K}}$ is quasicontinuous. Let $F = \{f_{U_{K}}\colon U_{K}\in \U \}$. From the construction of $f_{U_{K}}$, it's clear that, $f(x)\in \overline{F}$ and we have a subset $F' =\{f_{U_{K}} \colon U_{K}\in {\U}'\}$ of $F$ whose closure contains $f$. Then, by definition of tightness, $t(f, \qcx) = |F'|\leq \lambda$. Since $f$ be any function quasicontinuous function, thus $t(\qcx)\leq kL(X)$.
\end{proof}

\begin{corollary}
For a second countable $X$, then space $\qcx$ has countable tightness.
\end{corollary}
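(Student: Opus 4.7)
The plan is to invoke Theorem \ref{t:Compact lindelof no. of x and tightnessof qcx}, which identifies $t(\qcx)$ with the compact Lindel\"of number $kL(X)$ under a Hausdorff assumption on $X$ (a second countable space is understood to be Hausdorff in the present context, as implicit in the abstract's framing). This reduces the corollary to proving $kL(X)\le\alephnull$ whenever $X$ has a countable base.

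By Theorem \ref{t:Compact lindeloff no. condition on k cover}, it is enough to show that every open $k$-cover $\U$ of $X$ admits a countable $k$-subcover. Fix a countable base $\{B_{n}:n\in\N\}$ of $X$ and let $\B^{*}$ denote the countable family consisting of all finite unions of the $B_{n}$. For each $K\in\kx$, select $U_{K}\in\U$ with $K\subseteq U_{K}$; the compactness of $K$, combined with the fact that the $B_{n}$ contained in $U_{K}$ form a base for $U_{K}$, yields finitely many such $B_{n}$'s that cover $K$, and hence an element $V_{K}\in\B^{*}$ satisfying $K\subseteq V_{K}\subseteq U_{K}$.

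For each $V\in\B^{*}$ that is contained in at least one member of $\U$, pick one such member and denote it by $W_{V}$. Then
\[
\U'=\{W_{V}:V\in\B^{*},\ V\subseteq U\ \text{for some}\ U\in\U\}\subseteq\U
\]
is countable since $\B^{*}$ is, and for any $K\in\kx$ the function value $W_{V_{K}}$ is defined and satisfies $K\subseteq V_{K}\subseteq W_{V_{K}}\in\U'$. Thus $\U'$ is a countable $k$-subcover, giving $kL(X)\le\alephnull$, and therefore $t(\qcx)=kL(X)\le\alephnull$ by Theorem \ref{t:Compact lindelof no. of x and tightnessof qcx}.

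I do not anticipate a substantial obstacle here: the argument is the standard \emph{shrink-via-a-countable-base} trick, and the only point requiring genuine care is that the countable family must be extracted from $\U$ itself, not merely from $\B^{*}$, in order to qualify as a $k$-subcover in the sense of Definition \ref{d:k-cover}. That is precisely why the intermediate selection $V\mapsto W_{V}$ is inserted rather than working directly with the $V_{K}$'s.
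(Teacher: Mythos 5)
Your proposal is correct and follows the route the paper intends: the paper states this corollary without proof, as an immediate consequence of Theorem \ref{t:Compact lindelof no. of x and tightnessof qcx} (via Theorem \ref{t:Compact lindeloff no. condition on k cover}), and your countable-base argument showing $kL(X)\leq\alephnull$ is exactly the missing detail, including the correct care in selecting the subfamily from $\U$ itself. Your remark that the Hausdorff hypothesis of Theorem \ref{t:Compact lindelof no. of x and tightnessof qcx} must be read into the corollary (since second countability alone does not give it) is a fair and accurate observation about a gap in the paper's own statement.
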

\begin{theorem}\label{t:Tightness and k cover}
For Hausdorff  space $X$, $t(\qcx)\leq \lambda$ if and only if for every open $k$-cover $\U$ of space $X$, there is a $k$-subcover ${\U}'$ of $X$ with $|{\U}'|\leq \lambda$.
\end{theorem}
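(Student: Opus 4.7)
The plan is to recognize that this theorem is essentially a repackaging of the two preceding results and deduce it as an immediate corollary. By Theorem \ref{t:Compact lindelof no. of x and tightnessof qcx}, the Hausdorff hypothesis on $X$ gives $t(\qcx) = kL(X)$, and by Theorem \ref{t:Compact lindeloff no. condition on k cover}, the inequality $kL(X) \leq \lambda$ is itself equivalent to every open $k$-cover of $X$ admitting a $k$-subcover of cardinality at most $\lambda$. Chaining these equivalences yields $t(\qcx) \leq \lambda \iff kL(X) \leq \lambda \iff$ the stated $k$-cover condition. No additional work is needed beyond quoting these two results.

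For completeness, I would also note the direct-proof route that avoids explicitly naming $kL(X)$, since both implications already live inside the proof of Theorem \ref{t:Compact lindelof no. of x and tightnessof qcx}. For the forward direction ($t(\qcx) \leq \lambda$ implies the cover condition), I would redo the construction from that proof: given an open $k$-cover $\U$, for each $K \in \kx$ select $U_K \in \U$ containing $K$, build the quasicontinuous function $f_K$ (quasicontinuous by Lemma \ref{l:condition quasi}) that vanishes on $\overline{U_K}$ and equals $1$ off it, observe that the zero function $f_0$ lies in $\overline{F}$ where $F = \{f_K : K \in \kx\}$, invoke the tightness hypothesis to extract $F' \subseteq F$ with $|F'| \leq \lambda$ and $f_0 \in \overline{F'}$, and verify by testing basic neighborhoods $W(f_0, K, 1)$ that the corresponding collection $\{U_K : f_K \in F'\}$ is a $k$-subcover.

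For the converse inside the direct proof, one would take any $f \in \qcx$ and any $A \subseteq \qcx$ with $f \in \overline{A}$, and for each $K \in \kx$ consider the open $k$-cover extracted from neighborhoods $W(f, K, 1/n)$ intersecting $A$, apply the cover hypothesis to thin it to cardinality $\leq \lambda$, and assemble the resulting witnesses into a subset $A' \subseteq A$ of cardinality $\leq \lambda$ with $f \in \overline{A'}$. This direct version is more technical but parallels exactly the $t(\qcx) \leq kL(X)$ half of Theorem \ref{t:Compact lindelof no. of x and tightnessof qcx}.

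The main obstacle is purely cosmetic: since everything is already proved, I would favor the corollary-style derivation to avoid duplicating arguments. The only pitfall to guard against is to make sure the equivalence in Theorem \ref{t:Compact lindeloff no. condition on k cover} is quoted in both directions, so that both halves of the ``if and only if'' in the current theorem are genuinely covered rather than just the easy direction.
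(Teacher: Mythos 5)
Your primary derivation is exactly the paper's proof: the authors dispose of this theorem in one line by combining Theorem \ref{t:Compact lindeloff no. condition on k cover} with Theorem \ref{t:Compact lindelof no. of x and tightnessof qcx}, just as you propose. The additional direct-proof sketch is a faithful unpacking of the same ideas and is not needed.
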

\begin{proof}
Simply by using Theorems \eqref{t:Compact lindeloff no. condition on k cover} and  \eqref{t:Compact lindelof no. of x and tightnessof qcx}.
\end{proof}

\begin{theorem}
If $X$ be Hausdorff space, then $dt(\qcx) = t(\qcx)$.
\end{theorem}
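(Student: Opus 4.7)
By Proposition 2.2 of \cite{Okunev2002} (recalled in the Preliminaries) we already have $dt(\qcx)\le t(\qcx)$, so the substance of the claim lies in the reverse inequality $t(\qcx)\le dt(\qcx)$. My plan is to route through the compact Lindel\"of number: since $t(\qcx)=kL(X)$ by Theorem~\ref{t:Compact lindelof no. of x and tightnessof qcx} and since $t(\qcx)\le\lambda$ is characterised by a $k$-cover condition in Theorem~\ref{t:Tightness and k cover}, it suffices to prove that if $dt(\qcx)\le\lambda$ then every open $k$-cover $\U$ of $X$ admits a $k$-subcover of cardinality at most $\lambda$.

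Fix such a $\U$. Mimicking the construction in Theorem~\ref{t:Compact lindelof no. of x and tightnessof qcx}, for each $U\in\U$ define $f_U\colon X\to\R$ by $f_U|_{\overline{U}}=0$ and $f_U|_{X\setminus\overline{U}}=1$; Lemma~\ref{l:condition quasi} shows $f_U\in\qcx$. Set $F=\{f_U\colon U\in\U\}$ and let $f_0$ be the zero function. The $k$-cover property puts $f_0\in\overline{F}$, since any basic neighborhood $W(f_0,K,\epsilon)$ contains $f_U$ whenever $K\subseteq U$. To activate the density-tightness hypothesis I enlarge $F$ to a dense subset $D=F\cup D_0$ of $\qcx$, where $D_0$ is an appropriately chosen dense subset of the open set $\qcx\setminus\overline{F}$; density of $D$ in $\qcx$ is automatic because $F$ is dense in its own closure. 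Applying $dt(\qcx)\le\lambda$ at $f_0$ then produces $Y\subseteq D$ with $|Y|\le\lambda$ and $f_0\in\overline{Y}$. Writing $Y_F=Y\cap F$, the subfamily $\{U\in\U\colon f_U\in Y_F\}$ is a $k$-subcover of $X$ of cardinality $\le\lambda$, by the $W(f_0,K,1)$ argument used in Theorem~\ref{t:Compact lindelof no. of x and tightnessof qcx} — \emph{provided} one knows $f_0\in\overline{Y_F}$.

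The main obstacle is precisely to rule out the alternative $f_0\in\overline{Y\setminus F}$, since density tightness only guarantees $f_0\in\overline{Y}=\overline{Y_F}\cup\overline{Y\setminus F}$. I plan to resolve this by selecting $D_0$ inside a $\qcx$-closed set that avoids $f_0$. Concretely, fix a point $x_0\in X$ and a nonzero constant $c$, and require $D_0\subseteq\{g\in\qcx\colon g(x_0)=c\}$; this set is closed in $\qcx$ because point evaluations are continuous in the topology of uniform convergence on compacta, and it excludes $f_0$. Using Lemma~\ref{l:condition quasi} (or Lemma~\ref{l:quasicontinuous function separating point and closed set} under the additional regularity) one freely produces quasicontinuous modifications of a prescribed function that take the value $c$ at $x_0$, which lets one check that the constrained $D_0$ is still dense in $\qcx\setminus\overline{F}$. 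Once the separation is in place, $f_0\notin\overline{D_0}\supseteq\overline{Y\setminus F}$, forcing $f_0\in\overline{Y_F}$, and the argument closes. Verifying density of this constrained $D_0$ — especially in degenerate situations where $f_0$ might sit on the topological boundary of $\overline{F}$ — is where I expect the bulk of the technical work to lie.
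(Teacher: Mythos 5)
Your first half (quoting the Okunev--Tkachuk proposition for $dt(\qcx)\le t(\qcx)$) and your reduction of the reverse inequality to ``every open $k$-cover of $X$ has a $k$-subcover of cardinality $\le\lambda$'' via Theorem \ref{t:Tightness and k cover} both match the paper. The gap is in how you try to make the density-tightness hypothesis bite. Your family $F=\{f_U\colon U\in\U\}$ is not dense in $\qcx$, and your repair --- enlarging it by a set $D_0$ that is simultaneously (i) dense in the open set $\qcx\setminus\overline F$ and (ii) contained in the closed level set $L=\{g\in\qcx\colon g(x_0)=c\}$ with $c\neq 0$ --- is impossible whenever $\qcx\setminus\overline F\neq\emptyset$. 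Indeed $L$ is nowhere dense: for any $g\in L$ and any $\delta>0$ the function $g+\delta$ is again quasicontinuous, lies in $W(g,K,2\delta)$ for every $K\in\kx$, and misses $L$, so $L$ has empty interior and no subset of $L$ can be dense in a nonempty open set. Hence $D=F\cup D_0$ is not dense and $dt(\qcx)\le\lambda$ gives you nothing. Dropping constraint (ii) reinstates exactly the boundary problem you flag yourself: if $f_0$ lies in the closure of $\qcx\setminus\overline F$, the small set $Y$ produced by density tightness may sit entirely inside $D_0$ and yield no subcover. So as written the argument does not close.

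The paper sidesteps all of this by choosing a test family that is dense from the outset, namely $A=\{f\in\qcx\colon f(X\setminus U)\subseteq\{0\}\ \text{for some}\ U\in\U\}$. Density is immediate: given a basic neighborhood $W(f,K,\epsilon)$, pick $U\in\U$ with $K\subseteq U$ and truncate a member of that neighborhood to $0$ off $U$; the truncation is unchanged on $K$ and belongs to $A$. Density tightness then applies directly to $A$, producing $B\subseteq A$ with $|B|\le\lambda$ whose closure contains the chosen reference function, and $\U'=\{U_f\colon f\in B\}$ is the desired $k$-subcover. (A detail worth noting: since members of $A$ vanish off $U_f$ rather than equal $1$ there, the covering property $K\subseteq U_g$ is correctly extracted by testing against the constant function $1$ and the neighborhoods $W(1,K,1)$, as the paper does in its fan-tightness theorem, rather than against the zero function.) If you want to salvage your write-up, replace $F$ by this dense family; the enlargement-plus-separation device cannot work as stated.
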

\begin{proof}
Firstly, we claim that $t(\mathcal{U}_X) \leq dt(\mathcal{U}_X)$. To establish this, by Theorem \eqref{t:Tightness and k cover}, it suffices to demonstrate that for every open $k$-cover $\mathcal{U}$ of  $X$, there exists a $k$-subcover $\mathcal{U}'$ of $X$ with  $|\mathcal{U}'| \leq \lambda$. Take
\[A =\{f\in \qcx \colon \, \text{for some}\, U\in \U \, \text{such that}\, f(X/U)\subset \{0\} \}\]
Since $\U$ is the open $k$-cover of $X$, then we have a function
\[h(x) = g(x)\, \text{if} \, x \in U\, \text{and} \, h(X/U)\subset \{0\}\]
For any open set $W(f, K, \epsilon)$ in $\qcx$, then there is a $U \in \U$ such that $K \subset U$. Take $g \in W(f, K, \epsilon)$, then we have $h\in \qcx$ such that  $h\in  W(f, K, \epsilon) \cap A$. Therefore, $A$ is dense in $\qcx$. Let $f_{0}$ be a zero function, then by definition of density tightness, there is a subset $B$ of $A$ having cardinality less than or equal to $ \lambda$ and $f_{0}\in \overline{B}$. Take ${\U}' = \{U_{f} \colon f \in B\}$, clearly $|{\U}'| \leq \lambda$. Let any $K \in \kx$ and $W(f_{0}, K, 1)$ is a neighborhood of $f_{0}$, then there exists a function $g \in B$ such that $g \in W(f_{0}, K,1)$, then $K\subset U_{g}$. Thus, ${\U}'$ is $k$-cover of $X$.

Next to prove $dt(\qcx)\leq t(\qcx)$, it follows from Theorem \eqref{t:Density tightness and tightness relation}, we have $dt(\qcx)\leq t(\qcx)$.
\end{proof}
The sum of real-valued continuous functions $f$ and quasicontinuous function $g$ on $X$ is $f+g$ defined as $(f + g)(x) = f(x) + g(x)$  being quasicontinuous. The mapping $h_f \colon \qpx \to \qpx$ defined as $h_{f}(g) = f + g$ is continuous for every  $f \in \cpx$ see in [\cite{Kumar2022}, Proposition 5.4]. Consequently, we have the following results for $\qcx$.

\begin{lemma}\label{l:Homogenity}
For any real-valued continuous and quasicontinuous maps $f$ and $g$ on $X$, respectively, a mapping $h_{f} \colon \qcx \to \qcx$ such that $h_{f}(g) = f + g$ forms a homeomorphism.
\end{lemma}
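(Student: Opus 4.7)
The plan is to verify the four conditions making $h_f$ a homeomorphism from $\qcx$ onto itself: well-definedness of the codomain, bijectivity, continuity, and continuity of the inverse. For well-definedness, I will invoke the fact stated immediately before the lemma (citing [Kumar2022, Proposition 5.4]) that the pointwise sum $f + g$ of a continuous function $f$ and a quasicontinuous function $g$ on $X$ is again quasicontinuous; hence $h_f$ sends $\qcx$ into $\qcx$. For bijectivity, note that $-f$ is continuous, so $h_{-f}\colon \qcx \to \qcx$ is a map of the same form, and a direct computation gives $h_f \circ h_{-f} = h_{-f} \circ h_f = \mathrm{id}_{\qcx}$; consequently $h_f$ is a bijection with inverse $h_{-f}$.

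For continuity, I would work directly with the basic neighborhoods $W(h, K, \epsilon)$ from the preliminaries. Fixing $g_0 \in \qcx$ and a basic neighborhood $W(f+g_0, K, \epsilon)$ of $h_f(g_0)$, with $K \in \kx$ and $\epsilon > 0$, the crucial observation is that for any $g \in W(g_0, K, \epsilon)$ and any $x \in K$,
\[
|(f+g)(x) - (f+g_0)(x)| = |g(x) - g_0(x)| < \epsilon,
\]
so $h_f\bigl(W(g_0, K, \epsilon)\bigr) \subset W(f+g_0, K, \epsilon)$. Hence $h_f$ is continuous at $g_0$, and since $g_0$ was arbitrary, $h_f$ is continuous on $\qcx$. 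Continuity of the inverse $h_{-f}$ then follows from the identical argument with $-f$ in place of $f$.

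The main point is that there is no genuine obstacle here: the basic entourages $W(K, \epsilon)$ defining the topology of uniform convergence on compacta are translation-invariant under the pointwise action of any fixed real-valued function, so once the quasicontinuity of $f + g$ is granted from the cited preceding result, each of the four homeomorphism axioms is essentially immediate.
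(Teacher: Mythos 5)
Your proof is correct. The paper itself offers no argument for this lemma: it simply records the fact that $f+g$ is quasicontinuous and that the corresponding translation map on $\qpx$ (pointwise convergence) is continuous by [Kumar2022, Prop.~5.4], and then asserts the statement for $\qcx$ with the word ``consequently.'' You instead carry out the direct verification in the topology of uniform convergence on compacta: well-definedness from the quasicontinuity of $f+g$, bijectivity via the explicit inverse $h_{-f}$, and continuity of both $h_f$ and $h_{-f}$ from the inclusion $h_f\bigl(W(g_0,K,\epsilon)\bigr)\subset W(f+g_0,K,\epsilon)$, which holds because $|(f+g)(x)-(f+g_0)(x)|=|g(x)-g_0(x)|$ on $K$. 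This is exactly the right computation for the basic entourages $W(K,\epsilon)$, and it is the step the paper leaves implicit when passing from the pointwise to the compact-uniform topology; your version is therefore more self-contained than the source, at the cost of a few lines of routine estimation. No gaps.
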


\begin{theorem}
If $X$ is a Hausdorff space, then $ft(\qcx)=\alephnull$ if and only if for each sequence $\{{\U}_{n}\}_{n\in\N}$ of open $k$-covers of $X$, there is a finite subset ${{\U}_{n}}^{'}$ of ${\U}_{n}$ with $\bigcup_{n\in \N}{{\U}_{n}}^{'}$  becomes an open $k$-cover of $X$.
\end{theorem}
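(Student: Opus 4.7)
My plan is to adapt the Arkhangel'skii--Sakai-style characterization of countable fan tightness for function spaces, translating between subsets of $\qcx$ accumulating to the zero function and open $k$-covers of $X$. The framework parallels the proofs of Theorem \eqref{t:Compact lindelof no. of x and tightnessof qcx} and Theorem \eqref{t:Tightness and k cover}, and exploits the homogeneity provided by Lemma \eqref{l:Homogenity} to reduce the fan tightness condition to the single point $f_{0} \equiv 0$.

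For the forward implication, assume $ft(\qcx) = \alephnull$ and fix a sequence $\{\U_n\}_{n\in\N}$ of open $k$-covers of $X$. For each $n$ and each $U \in \U_n$, define $f_{U,n}\colon X \to \R$ by $f_{U,n}(\overline{U}) = \{0\}$ and $f_{U,n}(X \setminus \overline{U}) \subset \{1\}$; this lies in $\qx$ by Lemma \eqref{l:condition quasi} together with a direct check at boundary points using Hausdorffness. Set $T_n = \{f_{U,n} : U \in \U_n\}$. The $k$-cover property places $f_{0}$ in $\overline{T_n}$, since for any basic neighborhood $W(f_{0}, K, \epsilon)$ one can pick $U \in \U_n$ with $K \subset U$ and observe $f_{U,n} \in W(f_{0}, K, \epsilon)$. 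Applying countable fan tightness yields finite $B_n \subset T_n$ with $f_{0} \in \overline{\bigcup_n B_n}$, and the finite subfamilies $\U_n' = \{U \in \U_n : f_{U,n} \in B_n\}$ are then shown to form an open $k$-cover of $X$ by testing $W(f_{0}, K, 1)$ against $\bigcup_n B_n$ for each $K \in \kx$, exactly as in the proof of Theorem \eqref{t:Compact lindelof no. of x and tightnessof qcx}.

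For the converse, assume the finite selection property for open $k$-covers and verify countable fan tightness at $f_{0}$. Given $\{T_n\}$ with $f_{0} \in \bigcap_n \overline{T_n}$, for each $g \in T_n$ attach the open set $U_{g,n} = \bigcup\{V \subset X : V \text{ open},\ g(V) \subset (-1/n, 1/n)\}$ and set $\U_n = \{U_{g,n} : g \in T_n\}$. Argue that $\U_n$ is an open $k$-cover of $X$, then invoke the hypothesis to extract finite $\U_n' \subset \U_n$ with $\bigcup_n \U_n'$ an open $k$-cover of $X$. Defining $B_n = \{g \in T_n : U_{g,n} \in \U_n'\}$ and testing against basic neighborhoods $W(f_{0}, K, 1/n)$ for each compact $K$ should yield $f_{0} \in \overline{\bigcup_n B_n}$. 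The hardest step, and the main obstacle I anticipate, is showing that $\U_n$ is actually a $k$-cover: a quasicontinuous $g$ with $|g| < 1/(2n)$ on a compact $K$ need not force $K \subset U_{g,n}$, because quasicontinuity only guarantees small oscillation on \emph{some} open subset of an arbitrary neighborhood rather than on a single open neighborhood of every point of $K$. Bridging this gap—likely by refining the generating open sets or by exploiting the density of the set of continuity points of a quasicontinuous function together with the compactness of $K$—is the pivotal technical point.
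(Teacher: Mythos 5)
Your overall strategy is the same as the paper's: translate between subsets of $\qcx$ clustering at a constant function and sequences of open $k$-covers of $X$. In the forward direction the paper works with the dual convention --- it sets $A_n=\{f\in\qcx\colon f(X/U)\subset\{0\}$ for some $U\in\U_n\}$ and tests the finite selections against the constant function $g_1\equiv 1$ via $W(g_1,K,1)$, so that $|f(x)-1|<1$ on $K$ forces $f\neq 0$ on $K$ and hence $K\subset U$ outright. Your functions $f_{U,n}$ (zero on $\overline{U}$, one elsewhere) only yield $K\subset\overline{U}$, not $K\subset U$, so as written your finite subfamilies need not form a $k$-cover of $X$; this is easily repaired by switching to the paper's convention, but it is a slip. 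Note also that both you and the paper invoke Lemma \eqref{l:Homogenity} to reduce fan tightness to the single point $f_0$, yet the translations $h_f$ exist only for continuous $f$ and do not act transitively on $\qcx$, so strictly speaking the reduction to $f_0$ is not justified in either argument.

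The genuine gap is in the converse, and you have located it precisely but not closed it. For a quasicontinuous $g$ with $|g|<1/n$ on a compact set $K$, the set $\{x\colon |g(x)|<1/n\}$ need not contain an open neighbourhood of $K$ (for instance $g(x)=\sin(1/x)$ for $x\neq 0$, $g(0)=0$ is quasicontinuous and vanishes at $0$, yet no open set containing $0$ is mapped into $(-1/2,1/2)$), so your families $\U_n=\{U_{g,n}\colon g\in T_n\}$ are not obviously open $k$-covers, and the entire converse hangs on this. You should know that the paper does not bridge this either: its proof merely posits ``some open subset $O_{n,f}$ of $X$ whose image under $f$ is contained in $(-1/n,1/n)$'' and then asserts, with no argument, that for each $K\in\kx$ one can find $f\in W(f_0,K,1/n)\cap A_n$ with $K\subset O_{n,f}$. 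That assertion is exactly the unproved step you flagged, and neither the density of the continuity points of $f$ nor the compactness of $K$ obviously rescues it, since the $A_n$ range over arbitrary subsets of $\qcx$ with $f_0\in\overline{A_n}$ and may consist entirely of functions oscillating badly near $K$. So your proposal is incomplete at its self-declared pivotal point; the paper's own proof is incomplete at the same point, and a correct proof of the converse (if the statement holds as stated) requires an idea present in neither text.
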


\begin{proof}
 Consider $\{{\U}_{n}\}_{n\in\N}$ is  a sequence of open $k$-covers of $X$. Define
\[A_{n}=\{f\in\qcx \colon \,\text{For some}\, U\in {\U}_{n}\, \text{such that}\, f(X/U)\subset \{0\} \},\]
for all $n\in \N$. Since ${\U}_{n}$ is an open $k$-cover of  $X$ and  any open subset $W(f, K, \epsilon)$ of  $\qcx$, there exists a set $U$ in ${\U}_{n}$ such that $K \subset U$. Take $g\in W(f,K,\epsilon)$. Then we define $h\in \qcx$ as
\[h(x)=g(x)\, \, \text{if}\, x\in U\, \text{and}\, h(X/U)\subset \{0\}.\]
Then $h\in A_{n}\cap W(f,K,\epsilon)$, therefore, $A_{n}$ is dense in $\qcx$. Now, let's choose $g_{1}\in \qcx$ defined as $g_{1}(x)= 1, \,\forall\,x\in X $. Since each $A_{n}$ is dense, $g_{1}\in \bigcap_{n=1}^{\infty}\overline{A_{n}}$, so there is a finite set $B_{n}$ of $A_{n}$ such that $g_{1}\in \overline{\bigcup_{n=1}^{\infty}B_{n}}$.

Construct $B_{n}=\{f_{(n, j)}\in A_{n}\colon j\leq i(n)\}$, where $i(n)\in \N$ is finite for each $n\in \mathbb{N}$. For each pair $\{n,j\}$ of natural numbers, there exists some $U_{(n,j)}\in \U_{n}$ satisfying $f_{(n,j)}(X/U)\subset \{0\}$. Let $\U'_n=\{U_{(n,j)}\in \U_{n}\colon j\leq i(n)\}$. Next, we have to prove that $\bigcup_{n\in \N}\U'_{n}$ is a $k$-cover of $X$. For any set $K\in \kx$, there is a function $f_{(n,j)}\in W(g_{1},K,1)$ for some $n,j\in\N$ with $j\leq i(n)$. So there exists a set $U_{(n,j)}\in \bigcup_{n\in \N}\U'_{n}$ with $K \subset U_{(n,j)}$. Therefore, the $\bigcup_{n\in \N}\U'_{n}$ is an open $k$-cover of $X$.

Conversely, take $f_{0}$ be a zero function. By Lemma \eqref{l:Homogenity},  $\qcx$ is homogeneous, now it is enough to show that $ft(f_{0},\qcx)=\alephnull$. Let $f_{0}\in \bigcap_{n\in \N} \overline{A_{n}}$, where $A_{n}\subset \qcx$. For every $n\in \N$ and  function $f\in A_{n}$, there exists  some open subset of $X$ such that the image of that set under function $f$ is contained in $(-\frac{1}{n},\frac{1}{n})$, we denote it as $O_{n,f}$.  Construct ${\U}_{n}=\{O_{n,f}\colon f\in A_{n}\}$ for each $K\in \kx$ there exists a function $f\in W(f_{0},K,1/n)\cap A_{n}$ such that $K\subset O_{n,f}$. Now, let's consider two cases.

\begin{enumerate}
\item If $M$ is infinite. For every $W(f_{0},K,\epsilon)$ neighborhood of $f_{0}$ and $\epsilon>0$, there exists some $m\in\N$ such that  $\frac{1}{m}<\epsilon$. Then, from  construction of ${\U}_{m}$, we have some $g_{m}\in A_{m}$ with $g_{m}(X)=(-\frac{1}{m},\frac{1}{m})$ and $g_{m}\in W(f_{0},K,\epsilon)$. Thus, the sequence $\{g_{m}\}_{m\in \N}$ is convergent to $f_{0}$. Here $B_{m}=\{g_{m}\}_{m\in \N}$ satisfies that $f_{0}\in \overline{\bigcup_{m\in \N}  B_{m}}$.
\item If $M$ is finite. Then  there exists $n_{0}\in\N$ such that \[g(X)\neq (-\frac{1}{m},\frac{1}{m})\, \, \text{whenever}\, m\geq n_{0},\,g\in A_{m}.\]
Since $\{\U_{m}\}_{m\geq n_{0}}$ is a sequence of open $k$-cover, thus we have a finite subset $\U'_{m}$ of $\U_{m}$ such that $\bigcup_{m\geq n_{0}}\U'_{m}$ is an open $k$-cover of $X$. Let it be denoted as
\[{{\U}^{'}}_{m}=\{U_{(m,j)}\in {\U}_{m}\colon j\leq i(m)\}.\]
Then, from construction of ${\U}_{m}$, there exists some $f
_{(m,j)}\in A_{m}$ with \\$f_{(m,j)}(U_{(m,j)})\subset (-\frac{1}{m},\frac{1}{m})$. Now we prove $f_{0}\in \overline{\{f_{(m,j)}\colon j\leq i(m)\}}_{m\in \N}$.
For any neighborhood $W(f_{0},K,\epsilon)$ of $f_{0}$, let
\[H=\{\{m,j\}\colon m\geq n_{0},\, j\leq i(m)\,\text{and}\, K\subset U_{(m,j)})\}.\]
Then $H\neq \emptyset$. If it is finite, thus for each $\{m,j\}\in H$, by  $U_{(m,j)}\neq X$, taking $x_{(m,j)}\in X/U_{(m,j)}$. Then $\{x_{(m,j)}\colon \{m,j\}\in H\}\cup K \in \kx$. Yet there isn't any element   $\{x_{(m,j)}\colon \{m,j\}\in H\}\cup K$ in $\bigcup_{m\geq n_{0}}{{\U}^{'}}_{m}$, which is a contradiction. So $H$ is infinite, then there exists a $m\geq n_{0}$, $j\leq i(m)$ such that $K\subset U_{(m,j)}$ and $f_{(m,j)}(K)\subset (-\frac{1}{m}, \frac{1}{m})$ with $\frac{1}{m}<\epsilon$. Thus $f_{(m,j)}(K)\subset (-\epsilon,\epsilon)$ and $f_{(m,j)}\in W(f_{0},K,\epsilon)$. Hence $f_{0}\in \overline{\{f_{(m,j)}\colon j\leq i(m)\}}$.
\end{enumerate}
\end{proof}

\begin{theorem}
For any Hausdorff space $X$, then the following are equivalent:
\begin{enumerate}
 \item The space $\qcx$ has countable strong fan tightness.
\item  For each  $\{{\U}_{n}\}_{n\in\N}$ sequence of open $k$-covers of $X$, there exists a ${U}_{n}\in \{\U\_{n}\}$ such that $\{U_{n}\}_{n\in \N}$ is an open $k$-cover of $X$.
\end{enumerate}
\end{theorem}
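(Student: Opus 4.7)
The plan is to parallel the structure of the preceding fan-tightness theorem, with condition (2)---a one-set-per-sequence selection---replacing the finite-subset selection. For the implication $(1) \Rightarrow (2)$, given a sequence $\{\U_n\}_{n \in \N}$ of open $k$-covers, define $A_n = \{f \in \qcx \colon \exists\, U \in \U_n \text{ with } f(X/U) \subset \{0\}\}$. Exactly as in the previous proof, each $A_n$ is dense in $\qcx$: for any basic open set $W(f, K, \epsilon)$, pick $U \in \U_n$ with $K \subset U$, choose $g \in W(f, K, \epsilon)$, and let $h = g$ on $U$ and $h(X/U) \subset \{0\}$; by Lemma \ref{l:condition quasi}, $h \in \qcx$, and $h \in A_n \cap W(f, K, \epsilon)$. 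Since each $A_n$ is dense, the constant function $g_1 \equiv 1$ lies in $\bigcap_n \overline{A_n}$. Applying countable strong fan tightness to $g_1$ yields $f_n \in A_n$ with $g_1 \in \overline{\{f_n \colon n \in \N\}}$; each $f_n$ determines some $U_n \in \U_n$ with $f_n(X/U_n) \subset \{0\}$. To verify $\{U_n\}$ is a $k$-cover, fix $K \in \kx$: the neighborhood $W(g_1, K, 1)$ meets $\{f_n\}$, so the witnessing $f_n$ is strictly positive on $K$, forcing $K \subset U_n$.

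For the converse $(2) \Rightarrow (1)$, by Lemma \ref{l:Homogenity} it suffices to verify strong fan tightness at $f_0 \equiv 0$. Given $\{A_n\}$ with $f_0 \in \bigcap_n \overline{A_n}$, assign to each $f \in A_n$ an open set $O_{n,f} \subset X$ with $f(O_{n,f}) \subset (-1/n, 1/n)$ and set $\U_n = \{O_{n,f} \colon f \in A_n\}$; each $\U_n$ is an open $k$-cover of $X$ by the argument from the fan-tightness proof (for $K \in \kx$, pick $f \in A_n \cap W(f_0, K, 1/n)$ and take the associated $O_{n,f} \supset K$). Now split on whether $M = \{m \in \N \colon \exists\, g \in A_m \text{ with } g(X) \subset (-1/m, 1/m)\}$ is infinite or finite. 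If $M$ is infinite, choose such a $g$ as $f_m$ for each $m \in M$ and pick $f_m \in A_m$ arbitrarily for $m \notin M$; since $|f_m| < 1/m$ on all of $X$ for $m \in M$, the subsequence $\{f_m\}_{m \in M}$ converges uniformly to $f_0$, so $f_0 \in \overline{\{f_n\}}$.

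If $M$ is finite, fix $n_0$ past $M$, so $O_{m,f} \neq X$ for every $m \geq n_0$ and $f \in A_m$. Apply condition (2) to the tail $\{\U_m\}_{m \geq n_0}$ to obtain $O_m = O_{m, f_m} \in \U_m$ with $\{O_m\}_{m \geq n_0}$ a $k$-cover of $X$, and choose $f_m \in A_m$ arbitrarily for $m < n_0$. For any neighborhood $W(f_0, K, \epsilon)$, set $H = \{m \geq n_0 \colon K \subset O_m\}$. The main obstacle is upgrading the $k$-cover property, which only yields $H \neq \emptyset$, to $|H| = \infty$: if $H$ were finite, pick $x_m \in X/O_m$ for each $m \in H$, form the compact set $K' = K \cup \{x_m \colon m \in H\}$, and apply the $k$-cover to obtain $m_0 \geq n_0$ with $K' \subset O_{m_0}$; the case $m_0 \in H$ contradicts $x_{m_0} \in K' \setminus O_{m_0}$, while $m_0 \notin H$ (with $m_0 \geq n_0$) contradicts $K \subset K' \subset O_{m_0}$. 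With $H$ infinite, choose $m \in H$ with $1/m < \epsilon$; then $|f_m| < 1/m < \epsilon$ on $O_m \supset K$, giving $f_m \in W(f_0, K, \epsilon)$ and hence $f_0 \in \overline{\{f_n\}}$.
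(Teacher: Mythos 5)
Your proposal follows essentially the same route as the paper's proof: the dense sets $A_n$ and the constant function $g_1$ for $(1)\Rightarrow(2)$, and for $(2)\Rightarrow(1)$ the open sets $O_{n,f}$, the case split on whether $M$ is infinite or finite, and the enlarged-compact-set argument showing the index set $H$ must be infinite. Your write-up of the contradiction when $H$ is finite (splitting on whether $m_0 \in H$) is in fact a cleaner rendering of the step the paper states somewhat loosely, but the underlying argument is identical.
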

\begin{proof}\hfill
\begin{enumerate}
\item $(1)\implies(2)$. Let $\{{\U}_{n}\}_{n\in\N}$ be a sequence of open $k$-covers of $X$. Define
\[A_{n}=\{f \in \qcx \colon \,\text{For some} \, U \in {\U}_{n}\, \text{s.t.}\, f(X/U)\subset \{0\}, \forall n \in \N \}.\]
Since ${\U}_{n}$ is an open $k$-cover for $X$, then for any open subset $W(f, K, \epsilon)$ of $\qcx$, there exists a set $U$ in ${\U}_{n}$ such that $K\subset U$. Take $g \in W(f,K,\epsilon)$, then we have $h\in \qcx$ defined as
\[h(x) = g(x)\, \text{if} \, x\in U\, \text{and} \, h(X/U)\subset \{0\}.\]
Then $h\in A_{n}\cap W(f, K, \epsilon)$, therefore, $A_{n}$ is dense in $\qcx$.
Now, let's choose $g_{1}\in \qcx$ defined as $g_{1}(x)= 1, \,\forall\,x\in X$. Since each $A_{n}$ is dense, thus $g_{1}\in \bigcap_{n = 1}^{\infty}\overline{A_{n}}$. Then there is a function $f_{n}\in A_{n}$ such that $g_{1} \in \overline{\{f_{n}\colon n\in \N\}}$. So, for every $n\in\N$ and function $f_{n}\in A_{n}$, there is a set $U_{n}\in {\U}_{n}$ such that  $f_{n}(X/U)\subset \{3\}$. Now take the collection of  all such $U_{n}$'s  denoted as  $\{U_{n}:n\in\N\}$. Next, we prove $\{U_{n}:n\in\N\}$ is a $k$-cover of $X$. For any $K\in\kx$, since $g_{1} \in \overline{\{f_{n}\colon n\in \N\}}$, so there exists $f_{n}\in W(g_{1},K,1)$, for some $n\in\N$. Then, there is  a set $U_{n}$  with  $K\subset U_{n}$. Therefore, $\{U_{n}:n\in\N\}$ is an open $k$-cover of $X$.

\item $(2)\implies(1)$. Let $f_{0}$ be the zero function and $f_{0}\in \bigcap_{n\in \N} \overline{A_{n}}$, where $A_{n}\subset \qcx$. For each $n\in \N$ and $f\in A_{n}$, there exists some open subset of $X$ such that the image of that set under function $f$ is contained in $(-\frac{1}{n},\frac{1}{n})$, we denote it as $O_{n,f}$. Construct
\[{\U}_{n}=\{O_{n,f}\colon f\in A_{n}\}.\]
For each $K\in \kx$, there exists an $f\in W(f_{0},K,1/n)\cap A_{n}$ such that $K\subset O_{n,f}$. Thus, ${\U}_{n}$ is an open $k$-cover of $X$. Let $M=\{n\in \N \colon X\in {\U}_{n}\}$.
\begin{description}
  \item[Case-1] If $M$ is infinite. For every $W(f_{0},K,\epsilon)$ neighborhood of $f_{0}$ and $\epsilon>0$, there exists some $m\in\N$ such that $\frac{1}{m}<\epsilon$. Then, from the construction of ${\U}_{m}$, we have some $g_{m}\in A_{m}$ with $g_{m}(X)=(-\frac{1}{m},\frac{1}{m})$ and $g_{m}\in W(f_{0},K,\epsilon)$. Thus, the sequence $\{g_{m}\}_{m\in \N}$ is converges to $f_{0}$.
  \item[Case-2] If $M$ is finite. Then  there exists $n_{0}\in\N$ such that \[g(X)\neq (-\frac{1}{m},\frac{1}{m})\, \, \text{whenever}\, m\geq n_{0},\,g\in A_{m}.\]
Since $\{{\U}_{m}\}_{m\geq n_{0}}$ is a sequence of open $k$-cover, we have  a finite subset ${{\U}'_{m}}$ of ${\U}_{m}$ such that $\bigcup_{m\geq n_{0}}{{\U}'}_{m}$ is an open $k$-cover of $X$.
For $m\geq n_{0}$, there exists $f_{m}\in A_{m}$ such that $f_{m}(U_{m})=(-\frac{1}{m},\frac{1}{m})$. Next, we claim that $f_{0}\in \overline{\{f_{m}\colon m\geq n_{0}\}}$. For any neighborhood $W(f_{0},K,\epsilon)$ of $f_{0}$, let ${\U}_{K}=\{U_{m}\colon K\subset U_{m}, m\geq n_{0}\}$, clearly ${U}_{K}\neq \emptyset$. Let us assume ${\U}_{K}$ is finite, then  ${\U}_K=\{U_{m_{j}}\colon j\leq p\}$, where $p$ is some finite natural number. By $U_{m_{j}}\neq X$, we take $x_{m_{j}}\in X/U_{m_{j}}$, then $\{X_{m_{j}}\colon j\leq p\}\cup K$. So, $U_{m}\cap (\{x_{m_j}\colon j\leq p\}\cup K)=\emptyset$ whenever $m\neq n_{0}$. This is a contradiction, therefore, ${\U}_{K}$ is infinite. Hence there exists a natural number $m\neq n_{0}$ satisfying $K\subset U_{m}$ with $f_{m}(K)\subset f_{m}(U_{m})\subset (-\frac{1}{m},\frac{1}{m})$, with  $\frac{1}{m}< \epsilon$, then $f_{m}\in W(f_{0},K,\epsilon)$. This implies that $f_{0}\in  \overline{\{f_{m}\colon m\geq n_{0}\}}$.
\end{description}
\end{enumerate}
\end{proof}

\begin{lemma}\label{l:quasicontinuous function separating finite no. of  closed set}
Let $X$ be a regular space and $Y$ be any space. Then, for a finite number of nonempty disjoint closed subsets $F_{1}, F_{2}, \dots, F_{n}$ of $X$ and $y_{1}, y_{2}, \dots, y_{n}\in Y$, there exists a quasicontinuous function $f\colon X \to Y$ satisfying $f(F_{i})=\{y_{i}\}$ for all $1\leq i\leq n$.
\end{lemma}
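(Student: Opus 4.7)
The plan is to proceed by induction on $n$. The base case $n = 1$ is immediate: the constant function $f \equiv y_1$ is quasicontinuous and satisfies $f(F_1) = \{y_1\}$.

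For the inductive step, I assume the lemma for $n - 1$ and work with disjoint nonempty closed sets $F_1, \ldots, F_n$ and assigned values $y_1, \ldots, y_n \in Y$. First I separate $F_n$ from the remaining closed sets using regularity: the set $\bigcup_{i < n} F_i$ is a finite union of closed sets, hence closed, and is disjoint from $F_n$. For each $x \in F_n$, regularity of $X$ supplies an open neighborhood $U_x$ of $x$ with $\overline{U_x} \cap \bigcup_{i < n} F_i = \emptyset$. Setting $U := \bigcup_{x \in F_n} U_x$ then produces an open set with $F_n \subset U$ and $U \cap F_i = \emptyset$ for every $i < n$. By the induction hypothesis applied to $F_1, \ldots, F_{n-1}$ with values $y_1, \ldots, y_{n-1}$, there is a quasicontinuous function $g \colon X \to Y$ with $g(F_i) = \{y_i\}$ for $1 \leq i \leq n-1$. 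I then define $f \colon X \to Y$ by $f(x) = y_n$ for $x \in U$ and $f(x) = g(x)$ for $x \in X \setminus U$. The required values follow immediately: $F_n \subset U$ gives $f(F_n) = \{y_n\}$, while each $F_i$ with $i < n$ is disjoint from $U$, so $f$ coincides with $g$ on $F_i$ and $f(F_i) = g(F_i) = \{y_i\}$.

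The substantive part of the argument is to verify that $f$ is quasicontinuous at every point. On the open set $U$, $f$ is locally the constant $y_n$, hence continuous; on the open set $X \setminus \overline{U}$, $f$ agrees with the quasicontinuous function $g$ on a neighborhood of each point, and so quasicontinuity transfers directly. The nontrivial case is at a boundary point $x \in \overline{U} \setminus U$, where $f(x) = g(x)$: given open $A \ni x$ and open $B \ni g(x)$, quasicontinuity of $g$ at $x$ yields a nonempty open $C_0 \subset A$ with $g(C_0) \subset B$. If $y_n \in B$, then $f(C_0) \subset g(C_0) \cup \{y_n\} \subset B$, so $C_0$ already witnesses the quasicontinuity condition. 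The main obstacle I foresee is the complementary subcase $y_n \notin B$: here one must locate a nonempty open $C \subset A$ disjoint from $U$ on which $g$ still takes values in $B$. I plan to address this by exploiting the fact that each $U_x$ was chosen with $\overline{U_x} \cap \bigcup_{i<n} F_i = \emptyset$, combined with a reapplication of the quasicontinuity of $g$ at an auxiliary point of $A \setminus \overline{U}$ whenever that set is nonempty; the remaining configuration calls for a more delicate use of the structure inherited from the inductive construction of $g$, which is the technical heart of the proof.
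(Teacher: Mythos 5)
Your inductive framework and the gluing $f = y_n$ on $U$, $f = g$ on $X \setminus U$ are set up correctly, but the proof stops exactly at the step you yourself flag as ``the technical heart,'' and that step is not a deferred technicality: as stated, the construction can genuinely fail. Concretely, take $X=\R$, $Y=\R$, $n=2$, $F_1=\{-1\}$, $F_2=\{1/2,\,3/2\}$, $y_1\neq y_2$, and $g\equiv y_1$ (your own base case). The regularity step is free to return $U_{1/2}=(0,1)$ and $U_{3/2}=(1,2)$, both of whose closures miss $F_1$, so $U=(0,1)\cup(1,2)$. At $p=1\in\overline{U}\setminus U$ we have $f(p)=g(p)=y_1$, yet every nonempty open subset of the neighborhood $A=(0,2)$ of $p$ meets $U$, where $f\equiv y_2$; taking $B$ a neighborhood of $y_1$ omitting $y_2$ shows $f$ is not quasicontinuous at $p$. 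So in the subcase $y_n\notin B$ the open set you need may simply not exist, and the auxiliary set $A\setminus\overline{U}$ you propose to exploit can be empty. The standard repair is to arrange that $f$ takes the value $f(x)$ on some nonempty open set whose closure contains $x$ (a weakening of the criterion in Lemma~\ref{l:condition quasi}); for instance, declaring $f\equiv y_n$ on all of $\overline{U}$ fixes the boundary points of $U$, but then you must keep $\overline{U}$ (not merely $\bigcup_x\overline{U_x}$, which can be strictly smaller) away from $F_1,\dots,F_{n-1}$ and re-examine the points of $F_i\cap\overline{U}$ --- a nontrivial issue you would still have to resolve.

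For comparison, the paper does not induct at all: it writes $f$ down in a single step, taking the value $y_i$ on $F_i$ and a fixed base value elsewhere (a characteristic-function style construction), with the quasicontinuity verification left implicit. Your route is more structured and more honest about where the difficulty sits, but as submitted it has a genuine gap precisely there.
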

\begin{proof}
Since $F_{1}, F_{2},\dots, F_{n}$ are compact subsets of $X$. For  each $i$, $1\leq i\leq n$, there exists a $f_{i}\in\qcx$ defined as
\[f_{i}(x)=
\begin{cases}
1 & \text{if $x\in{F_{i}},$}\\
0 & \text{otherwise}.
\end{cases}\]
Then we can define a quasicontinuous function $f\colon X\to Y$ as
\[f(x) =
\begin{cases}
y_{i}f_{i}(x)  & \text{if $x\in{F_{i}},$}\\
0 & \text{otherwise}.
\end{cases}\]
It satisfies $f(F_{i})=\{y_{i}\}$ for all $1\leq i \leq n$.
\end{proof}

\begin{theorem}
Let $X$ be a regular space. Then $d(\qcx)\leq kcof(X)$.
\end{theorem}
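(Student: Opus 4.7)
The plan is to construct a dense subset $D\subseteq \qcx$ of cardinality at most $\kappa:=kcof(X)$, with Lemma \ref{l:quasicontinuous function separating finite no. of  closed set} as the main tool. First I would fix a cofinal family $\mathcal{C}=\{K_{\alpha}:\alpha<\kappa\}\subseteq\kx$; since $X$ is regular, each $K_{\alpha}$ is closed in $X$ and so is an admissible input to that lemma.

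For the construction, for each finite pairwise disjoint subfamily $\{K_{\alpha_{1}},\dots,K_{\alpha_{n}}\}\subseteq\mathcal{C}$ and each choice of rationals $q_{0},q_{1},\dots,q_{n}\in\mathbb{Q}$, I would apply Lemma \ref{l:quasicontinuous function separating finite no. of  closed set} to obtain a quasicontinuous $h$ with $h(K_{\alpha_{i}})=\{q_{i}\}$ and $h\equiv 0$ outside $\bigcup_{i} K_{\alpha_{i}}$, and place the translate $q_{0}+h$ into $D$. The cardinality bound is immediate: finite subfamilies of $\mathcal{C}$ number $\sum_{n<\omega}\kappa^{n}=\kappa$ and the rational parameters contribute only $\alephnull$, so $|D|\leq\kappa$.

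The density verification is where the real work lies. Given $f\in\qcx$, $K\in\kx$, and $\epsilon>0$, I would pick $K^{*}\in\mathcal{C}$ with $K\subseteq K^{*}$, cover the relevant portion of $f(K)$ by finitely many open $(\epsilon/2)$-intervals centered at rationals $q_{1},\dots,q_{n}$, and then try to realize the corresponding $f$-preimages inside $K^{*}$ as pairwise disjoint compact pieces taken from $\mathcal{C}$, so that the function built from them by Lemma \ref{l:quasicontinuous function separating finite no. of  closed set} is within $\epsilon$ of $f$ on $K$.

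The main obstacle is precisely this last step: because $f$ is only quasicontinuous, its preimages of open intervals need not be closed, so they cannot be fed directly into the separation lemma. The planned workaround is to exploit quasicontinuity of $f$ pointwise, so that for each $x\in K$ one locates an open neighbourhood in $X$ on which some chosen rational value closely approximates $f$ there, then to reduce to a finite subcover by compactness of $K$, shrink to closed sets by regularity, and finally absorb these into pairwise disjoint members of $\mathcal{C}$ using cofinality. The delicate tension is between keeping the compact pieces disjoint (as required by the lemma) and having them cover enough of $K$ to control the uniform approximation within $\epsilon$.
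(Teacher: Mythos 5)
Your construction of $D$ coincides with the paper's: fix a cofinal family in $\kx$, take finite pairwise disjoint subfamilies, assign rational values via Lemma \ref{l:quasicontinuous function separating finite no. of  closed set}, and count. The cardinality estimate is fine. The problem is exactly the step you flag as "where the real work lies": the density verification cannot be completed, and the workaround you sketch does not close the gap. Every function you put into $D$ takes only finitely many values --- a rational constant on each of finitely many pairwise disjoint members of the cofinal family $\mathcal{C}$, and $q_{0}$ elsewhere. To place such a function in $W(f,K,\epsilon)$ you must cover $K$ by finitely many pairwise disjoint compact sets, \emph{each belonging to $\mathcal{C}$}, on each of which $f$ stays within $\epsilon$ of a single constant. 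Neither demand is attainable in general: a cofinal family need not contain any small compact sets (on $X=[0,1]$ the single set $[0,1]$ is already cofinal, so the only admissible disjoint subfamily is the singleton and $D$ reduces to constant functions), and a quasicontinuous $f$ with a jump inside $K$ is not within small $\epsilon$ of any function that is locally constant near the jump point. Your plan to "exploit quasicontinuity pointwise, pass to a finite subcover, shrink by regularity, and absorb into disjoint members of $\mathcal{C}$" founders on both counts: quasicontinuity only gives good behaviour on \emph{some} nonempty open subset of each neighbourhood, not on a neighbourhood of the point itself, so the resulting open sets need not cover $K$; and there is no reason the shrunken closed sets lie in $\mathcal{C}$ or can be made disjoint while still covering $K$.

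In fact the statement itself is false, so no repair is possible. Take $X=[0,1]$, which is regular with $kcof(X)=\alephnull$. For $t\in(0,1)$ the characteristic function $\chi_{[t,1]}$ is quasicontinuous, and for $s\neq t$ the uniform distance between $\chi_{[s,1]}$ and $\chi_{[t,1]}$ equals $1$; hence $\{W(\chi_{[t,1]},[0,1],\tfrac12)\colon t\in(0,1)\}$ is an uncountable family of pairwise disjoint nonempty open subsets of $\qcx$, giving $d(\qcx)\geq\mathfrak{c}>kcof(X)$. For what it is worth, the paper's own density argument hides the same hole: it exhibits a function agreeing with $f$ on the compact set $C$ and constant off $C$, but such a function is not of the form $g_{K,r}$ and so need not lie in the set $D$ that was actually constructed. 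You were right to isolate this step as the crux; the honest conclusion is that it cannot be carried out.
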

\begin{proof}
Let $kcof(X)=|\V|$, where $\V$ is a cofinal family in $\kx$. Now assume $\mathcal{M}$ to be a collection of all finite pairwise disjoint sets of the cofinal family $\V$. Let $K=\{K_{1}, K_{2},\dots, K_{n}\}\in \mathcal{M}$ and $r=\{r_{1}, r_{2},\dots, r_{n}\}$ be a finite set of rational numbers. Then, by Lemma \eqref{l:quasicontinuous function separating finite no. of  closed set}, there is a quasicontinuous function such that
$g_{K,r}(K_i)=\{r_{i}\}$ for all $1\leq i\leq n$. Consider a family
\[D=\{g_{K,r}\colon K\in \mathcal{M}\, and\, r\in \mathbb{Q} \}\]
It is clear that $|D|\leq |\V|\leq \eta$. Now, it is sufficient to prove $D$ is dense in $\qcx$. For any $f\in \qcx$,  $r\in \mathbb{Q}$ and  $C\in \kx$.  Let us define a quasicontinuous function
\[g_{K',r}(x) =
\begin{cases}
f(x)  & \text{if $x\in C$},\\
r &  \text{if $x\in X/C$}.
\end{cases}\]
Where $K'=\{C'\}$ and set $C'\in \mathcal{M}$ with $C'\subset X/C$.
Therefore, any neighborhood $W(f,C,\epsilon)$ of $f$,  there exists a  quasicontinuous function $g_{K',r}$ lies in
 $W(f,C,\epsilon)$. Hence, the set $D$ is dense.
\end{proof}
\section{Frechet-Urysohn properties of the space $\qcx$}\label{s:Frechet-Urysohn Property of qcx}
A space has  Fréchet-Urysohn property if, for any closed set and any point not in that set, there exists a sequence of points from the set that converges to the point outside the set. This property is important for interpreting a space's structure in terms of sequences because it says that points in the space can be ``approximated" by sequences from closed sets. The Fréchet-Urysohn property illustrates the relation between sequences and set closure and seems to be focused on ensuring that closed sets are ``sequentially dense" in some manner. This makes it an important aspect of the topological study of convergence and sequentially. Furthermore, we examine the behavior of different types of covers for $X$ within $\qcx$, when ever $\qcx$ is taken as a Fréchet-Urysohn (or Whyburn) space.

\begin{theorem}\label{t:Qx frechet then k cover has countable sub k cover}
If $X$ is Hausdorff space and  $\qcx$ is a Frechet-Uryshon space, then each open $k$-cover of $X$ has a countable $k$-subcover of $X$.
\end{theorem}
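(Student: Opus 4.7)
The plan is to adapt the proof of Theorem \ref{t:Compact lindelof no. of x and tightnessof qcx} almost verbatim, with the Fréchet--Urysohn hypothesis replacing the general tightness bound so as to extract a genuine \emph{sequence}, and therefore a \emph{countable} $k$-subcover, from the closure computation. So, let $\mathcal{U}$ be an arbitrary open $k$-cover of $X$. For each $K \in \kx$ I would pick $U_K \in \mathcal{U}$ with $K \subset U_K$ and build the two-valued function $f_K \colon X \to \mathbb{R}$ by $f_K(\overline{U_K}) = \{0\}$ and $f_K(X/\overline{U_K}) \subset \{1\}$. Quasicontinuity of $f_K$ follows from Lemma \ref{l:condition quasi} exactly as in Theorem \ref{t:Compact lindelof no. of x and tightnessof qcx}, so $F := \{f_K \colon K \in \kx\} \subset \qcx$.

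Next I would show that the zero function $f_0$ lies in $\overline{F}$: given any basic neighborhood $W(f_0, C, \epsilon)$ of $f_0$, the function $f_C$ belongs to $F$ and satisfies $f_C \equiv 0$ on $C \subset U_C \subset \overline{U_C}$, so $f_C \in F \cap W(f_0, C, \epsilon)$. Since $\qcx$ is assumed to be a Fréchet--Urysohn space, there exists a sequence $\{f_{K_n}\}_{n\in\mathbb{N}} \subset F$ converging to $f_0$ in $\qcx$. Set $\mathcal{W} := \{U_{K_n} \colon n \in \mathbb{N}\}$, a countable subfamily of $\mathcal{U}$; this is the candidate $k$-subcover.

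The final step is to verify that $\mathcal{W}$ is indeed a $k$-cover of $X$. Fix $C \in \kx$ and apply convergence to the neighborhood $W(f_0, C, 1)$: there is an index $n$ with $f_{K_n} \in W(f_0, C, 1)$, i.e., $|f_{K_n}(x)| < 1$ for every $x \in C$. Because $f_{K_n}$ only takes the values $0$ and $1$, this forces $f_{K_n}(x) = 0$ on $C$, and therefore $C \subset U_{K_n} \in \mathcal{W}$, following the convention adopted in the proof of Theorem \ref{t:Compact lindelof no. of x and tightnessof qcx}. Since $C$ was arbitrary, $\mathcal{W}$ is a countable $k$-subcover of $\mathcal{U}$.

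The main obstacle is conceptual rather than computational: one must recognize that the cardinality bound encoded by the tightness argument of Theorem \ref{t:Compact lindelof no. of x and tightnessof qcx} can be upgraded to a \emph{sequential} witness precisely when $\qcx$ is Fréchet--Urysohn, and that is exactly what makes the extracted $k$-subcover countable. All the remaining ingredients, namely the quasicontinuity of the test functions $f_K$, the density argument $f_0 \in \overline{F}$, and the extraction of the cover from values of $f_{K_n}$, are taken straight from the proof of Theorem \ref{t:Compact lindelof no. of x and tightnessof qcx}.
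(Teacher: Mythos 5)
Your proposal is correct and follows essentially the same route as the paper's own proof: the same test functions $f_K$ built from the chosen $U_K$, the same verification that the zero function lies in $\overline{F}$, the same extraction of a convergent sequence via the Fréchet--Urysohn hypothesis, and the same use of the neighborhood $W(f_0,K,1)$ to confirm that $\{U_{K_n}\}$ is a countable $k$-subcover. The small gap between $f_{K_n}$ vanishing on $\overline{U_{K_n}}$ and concluding $C\subset U_{K_n}$ (rather than $C\subset\overline{U_{K_n}}$) is present in the paper's argument as well, so nothing new is lost.
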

\begin{proof}
Suppose $\qcx$ be a Frechet-Urysohn space and $\U$ be any open $k$-cover of $X$. Thus, for each $K\in\kx$ there is a set $U_{K}\in \U$ such that $K\subset U_{K}$. Then $f_{K}\in \qcx$ such that $f_{K}(\overline{U_{K}})=\{0\}$ and $f_{K}(X/\overline{U_{K}})=\{1\}$. Take the zero function $f_{0}\in \qcx$ and let $W(f_{0},K,\epsilon)$ be any neighborhood of $f_{0}$. For $K$, there exists $U_{K}$ such that $K\subset U_{K}$. Thus, we have  a function $f_{K}$ that lies in $W(f_{0},K,\epsilon)$. Therefore, $f_{0}\in \overline{\{f_{K}\colon K\in \kx\}}$. Then by definition of Frechet-Urysohn space, there exists a sequence $\{f_{K_{n}}\colon n\in\N\}$ converging to $f_{0}$. Construct  the collection $\{U_{K_{n}}\colon n\in \N\}$, which is clearly countable. It remains to prove that it is $k$-cover of $X$. Let any $K\in \kx$, there exists some $n\in \N$ such that   $f_{K_{n}}\in W(f_{0},K,1)$, then $K\subset U_{K_{n}}$. Therefore,  $\{U_{K_{n}}\colon n\in \N\}$ is a $k$-cover of $X$.
\end{proof}

In 2007,  Ferrando and  Moll proved that  [\cite{Ferrando2007}, Theorem 1], for a locally compact Hausdorff space $X$  and  $\cx$ endowed with the compact-open topology, the following conditions are equivalents: (a) $\cx$ is a Frechet-Urysohn space, (b) the tightness of $\cx$ is countable, and (c) $X$ is $\sigma$-compact. Now, we provided the following results for the space $\qcx$.
\begin{theorem}\label{t:frechet equivalent condition}
For a locally compact Hausdorff space $X$ then the following  are equivalent;
\begin{enumerate}
\item The tightness of $\qcx$ is countable.
\item The space $\qcx$ be Frechet-Urysohn.
\item The space $X$ is $\sigma$-compact.
\end{enumerate}
\end{theorem}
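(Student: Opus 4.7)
The plan is to establish the cycle $(2) \Rightarrow (1) \Rightarrow (3) \Rightarrow (2)$, using Theorem \ref{t:Compact lindelof no. of x and tightnessof qcx} as the bridge between the cardinal invariants of $\qcx$ and the covering properties of $X$.

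For $(2) \Rightarrow (1)$, I would just invoke the standard fact that every Fr\'echet--Urysohn space has countable tightness: given $f \in \overline{A}$ in $\qcx$, a sequence $\{f_n\} \subseteq A$ converging to $f$ supplies the countable witness $\{f_n : n \in \N\} \subseteq A$ whose closure contains $f$, so $t(\qcx) \leq \alephnull$.

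For $(1) \Rightarrow (3)$, Theorem \ref{t:Compact lindelof no. of x and tightnessof qcx} yields $kL(X) = t(\qcx) \leq \alephnull$. Using local compactness, the family $\U = \{V \subseteq X : V \text{ is open with } \overline{V} \text{ compact}\}$ is an open $k$-cover of $X$: any compact $K \subseteq X$ can be covered by finitely many open sets with compact closures, whose union lies in $\U$ and contains $K$. A countable $k$-subcover $\{V_n\}$ of $\U$ then in particular covers the singletons (compact in the Hausdorff space $X$), so $X = \bigcup_n V_n \subseteq \bigcup_n \overline{V_n}$ exhibits $X$ as a countable union of compact sets.

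For $(3) \Rightarrow (2)$, which is the main obstacle, the strategy is to upgrade $\sigma$-compactness to hemicompactness (via a compact exhaustion) and then deduce first-countability of $\qcx$, from which the Fr\'echet--Urysohn property follows. Writing $X = \bigcup_n C_n$ with $C_n$ compact and using local compactness, I would build inductively open sets $V_n$ with compact closures $K_n := \overline{V_n}$ satisfying $C_n \cup K_{n-1} \subseteq V_n$, obtaining an increasing sequence of compact sets with $K_n \subseteq \mathrm{int}(K_{n+1})$ and $X = \bigcup_n \mathrm{int}(K_n)$. Any compact $K \subseteq X$ is then covered by finitely many (hence, by increasingness, a single) $K_n$, so $W(f, K_n, 1/m) \subseteq W(f, K, 1/m)$ whenever $K \subseteq K_n$. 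The countable family $\{W(f, K_n, 1/m) : n, m \in \N\}$ is therefore a local base at $f$, making $\qcx$ first-countable and a fortiori Fr\'echet--Urysohn. The delicate point is really the inductive construction of the compact exhaustion; once that is in hand, the rest of the argument is a routine base-swallowing computation combined with the standard fact that first-countability implies the Fr\'echet--Urysohn property.
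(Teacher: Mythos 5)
Your proposal is correct and proves the same cycle $(2)\Rightarrow(1)\Rightarrow(3)\Rightarrow(2)$, but two of the three legs are argued differently from the paper. For $(2)\Rightarrow(1)$ you invoke the purely general fact that every Fr\'echet--Urysohn space has countable tightness, whereas the paper routes through Theorem \ref{t:Qx frechet then k cover has countable sub k cover} (Fr\'echet--Urysohn implies countable $k$-subcovers) combined with $t(\qcx)=kL(X)$; your version is more elementary and needs no hypotheses on $X$ at all. For $(1)\Rightarrow(3)$ both arguments pass through $kL(X)=t(\qcx)\leq\alephnull$, but you apply it to the \emph{specific} open $k$-cover consisting of open sets with compact closure, so that the closures of the countable subcover are automatically compact; the paper instead starts from an \emph{arbitrary} open $k$-cover and asserts that the closures of its members are compact by local compactness, which is not justified as stated (e.g.\ $\{X\}$ can occur in an open $k$-cover), so your choice actually repairs a gap. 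For $(3)\Rightarrow(2)$ you build a compact exhaustion $K_n\subseteq\operatorname{int}(K_{n+1})$ to get hemicompactness and then conclude that $\{W(f,K_n,1/m)\colon n,m\in\N\}$ is a countable local base, so $\qcx$ is first countable and hence Fr\'echet--Urysohn; the paper uses the same exhaustion but stops short of first countability, directly extracting a convergent sequence from an arbitrary set $F$ with $f_0\in\overline{F}$. Your route yields the formally stronger conclusion (first countability) at essentially no extra cost, while the paper's is marginally more self-contained in that it never appeals to the implication ``first countable $\Rightarrow$ Fr\'echet--Urysohn.'' The one point you flag as delicate, the inductive construction of the exhaustion, is indeed the crux shared by both proofs and works exactly as you describe in a locally compact Hausdorff space.
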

\begin{proof}\hfill
\begin{enumerate}
\item $(\textit{2})\Rightarrow (\textit{1})$. Clearly, by Theorem \eqref{t:Qx frechet then k cover has countable sub k cover} and Theorem \eqref{t:Compact lindelof no. of x and tightnessof qcx}.
\item $(\textit{1})\Rightarrow (\textit{3})$. Let $t(\qcx)=\alephnull$ and $\U$ be any open $k$-cover of $X$.  Then, by Theorem \eqref{t:Tightness and k cover}, there exists a $k$-subcover ${\U}'$ of $X$ with $|{\U}'|\leq \alephnull$. Take $\mathcal{M}=\{\overline{U}\colon U\in {\U}'\}$. Since $X$ is locally compact, each set in  $\mathcal{M}$ is compact. Now it is sufficient to prove that $\mathcal{M}$ is a cofinal subfamily in $\kx$. For every $K\in\kx$, there exists some set $\overline{U}\in \mathcal{M}$ that contains $K$. Therefore, $\mathcal{M}$ is a cofinal subfamily in $\kx$. Hence, $X$ is hemicompact.
\item $(\textit{3})\Rightarrow (\textit{2})$. Let $X$ is $\sigma$-compact. Then, definition  of $\sigma$-compactness, $X=\bigcup_{n\in\N}K_{n}$, where $K_{n}$ is a compact subset of $X$ for all $n\in\N$. Thud, for every $K\in\kx$ there exists a $K_{j}$ for  some $j\in \N$ such that $K\subset K_{j}$. Construct  $K_{i}'=\bigcup_{n=1}^{i}K_{n}, \, \forall \, i\in \N$. Since the space $\qcx$ is homogenous. Let $f_{0}$ be the zero function and $F$ be any subset of $\qcx$ such that $f_{0}\in \overline{F}$. Then for  every $W(f_{0},K_{i}',\frac{1}{n})$ neighborhood of $f_{0}$, there exists some function $f_{K_{i}',i}\in W(f_{0},K_{i}^{'},\frac{1}{n})\cap F$ for all $i\in \N$. Now assume that $g_{i}=f_{K_{i}',i}$ for all $i
\in \N$. Next, we claim that sequence $\{g_{i}\colon i\in \N\}$ is converges to $f_{0}$. Let any $K\in \kx$, $\epsilon>0$, there exists some $ p\in \N$ with $\frac{1}{p}<\epsilon$, also, there is $K_{m}$ for some $m\in \N$ such that $K\subset K_{m}$, which implies $K\subset K_{i}'$ for all $i\geq  m$.
\begin{description}
\item[Case-1] If $m\geq p$. Then $\frac{1}{m}\leq \frac{1}{p}$. Thus,
\[|g_{i}(x)-f_{0}(x)|<\epsilon, \, \forall x \in K, \, \forall\, i\geq m\]
\item[Case-2] If $p>m$. Since $K\subset K_{i}'$ for all $i\geq m$. Thus, $K\subset K_{i}'$ for all $i\geq p$. Then, we have
\[|g_{i}(x)-f_{0}(x)|<\epsilon,\, \forall x \in K,\, \forall\, i\geq p\]
Therefore, the sequence $\{g_{i}\colon n\in N\}$ converges to $f_{0}$. Hence, $\qcx$ is a Frechet-Urysohn space.
\end{description}
\end{enumerate}
\end{proof}

\begin{corollary}
If $X$ is a locally compact metric space, then the following statements are equivalent:
\begin{enumerate}
\item The space $\qcx$ is a Frechet-Urysohn space.
\item The space $\qcx$ has countable tightness.
\item The space $X$ is separable.
\end{enumerate}
\end{corollary}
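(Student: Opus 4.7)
The plan is to reduce the corollary to the preceding Theorem \ref{t:frechet equivalent condition} together with the standard topological fact that, for a locally compact metric space, $\sigma$-compactness and separability are equivalent. Since a locally compact metric space is in particular a locally compact Hausdorff space, Theorem \ref{t:frechet equivalent condition} already delivers the equivalence of conditions (1), (2) and $\sigma$-compactness of $X$. Thus the only new content is the equivalence of $\sigma$-compactness and separability in the metric locally compact setting, and the corollary will follow by transitivity.

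First I would handle the implication $\sigma$-compact $\Rightarrow$ separable. Writing $X = \bigcup_{n \in \N} K_{n}$ with each $K_{n}$ compact, I would use that every compact metric space is separable, pick a countable dense subset $D_{n} \subset K_{n}$, and observe that $D = \bigcup_{n \in \N} D_{n}$ is countable and dense in $X$, giving separability.

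For the converse, separable $\Rightarrow$ $\sigma$-compact (using local compactness), I would argue that a separable metric space is second countable and hence Lindel\"of. Because $X$ is locally compact, every $x \in X$ admits an open neighborhood $U_{x}$ with compact closure $\overline{U_{x}}$; the family $\{U_{x} : x \in X\}$ is an open cover of $X$, so by Lindel\"ofness there is a countable subcover $\{U_{x_{n}} : n \in \N\}$. Then $X = \bigcup_{n \in \N} \overline{U_{x_{n}}}$ exhibits $X$ as a countable union of compact sets, i.e. $X$ is $\sigma$-compact.

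Combining both implications, within locally compact metric spaces $X$, condition (3) of the corollary coincides with $\sigma$-compactness of $X$, which by Theorem \ref{t:frechet equivalent condition} is equivalent to (1) and (2). I do not anticipate a real obstacle here; the only subtlety is making sure that local compactness is used precisely in the separable $\Rightarrow$ $\sigma$-compact direction, since without it a separable metric space need not be $\sigma$-compact (for example, the irrationals).
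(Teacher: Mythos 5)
Your proposal is correct and follows essentially the same route as the paper: both reduce the corollary to Theorem \ref{t:frechet equivalent condition} and the standard equivalence of separability and $\sigma$-compactness for locally compact metric spaces. You actually supply more detail than the paper does for that equivalence (the paper merely asserts both directions), but the underlying argument is the same.
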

\begin{proof}\hfill
\begin{enumerate}
\item $(\textit{2})\Rightarrow (\textit{3})$. The space  $\qcx$ has countable tightness. Then, by Theorem \eqref{t:frechet equivalent condition}, it is $\sigma$-compact. Therefore, $\qcx$ is separable.
\item $(\textit{3})\Rightarrow (\textit{1})$. Since any separable and locally compact metric space $X$, is $\sigma$-compact. Thus, by Theorem \eqref{t:frechet equivalent condition}, $\qcx$ is Frechet-Urysohn space.
\item $(\textit{1})\Rightarrow (\textit{2})$. From, by Theorem \eqref{t:Qx frechet then k cover has countable sub k cover} and Theorem \eqref{t:Compact lindelof no. of x and tightnessof qcx}.
\end{enumerate}
\end{proof}

\begin{definition}
A family $\mathcal{U}$ of subsets of a space $X$ is called a $k_{f}$-cover if for any finite subfamily ${\mathcal{K}}'$ of $\kx$ there exists $O\in\mathcal{U}$ such that $\bigcup {\mathcal{K}}' \subset O$. If each element of $\mathcal{U}$ is open, it is known as an open $k_{f}$-cover.
\end{definition}
Suppose $X$ is any space and $\gamma$ is a family of subsets of $X$. Let $\lim \gamma=\{z\in X\colon |\{U\in \gamma\colon z\notin U\}<\alephnull|\}$, and if $g$ be real-valued function on $X$, then $supp(g)=\{z\in X\colon g(z)\neq 0\}$.

\begin{theorem}
Let $X$ be any Hausdorff space and $\qcx$ be a Frechet-Urysohn space. Then for any open $k_{f}$-cover $\U$ of  $X$, there exists a countable subfamily $\xi$ of  $\U$ such that $\lim \xi= X$.
\end{theorem}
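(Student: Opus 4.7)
The plan is to transpose the argument of Theorem 4.1 from single compact sets to finite subfamilies of $\kx$, so that a $k_f$-cover plays exactly the role that a $k$-cover played there. For each finite subfamily $\mathcal{K}' \subseteq \kx$, the $k_f$-cover hypothesis produces some $U_{\mathcal{K}'} \in \U$ with $\bigcup \mathcal{K}' \subseteq U_{\mathcal{K}'}$; I would attach to it the two-valued function $f_{\mathcal{K}'} \colon X \to \R$ defined by $f_{\mathcal{K}'}(\overline{U_{\mathcal{K}'}}) = \{0\}$ and $f_{\mathcal{K}'}(X \setminus \overline{U_{\mathcal{K}'}}) = \{1\}$. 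Exactly as in Theorems 3.3 and 4.1, $f_{\mathcal{K}'} \in \qcx$: it is locally constant on the two disjoint open sets $U_{\mathcal{K}'}$ and $X \setminus \overline{U_{\mathcal{K}'}}$, while at each boundary point of $U_{\mathcal{K}'}$ the nonempty open set $A \cap U_{\mathcal{K}'}$ (for any open $A$ containing the point) witnesses the quasicontinuity condition.

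Next, I would let $F = \{f_{\mathcal{K}'} : \mathcal{K}' \text{ is a finite subfamily of } \kx\}$ and argue that the zero function $f_0$ lies in $\overline{F}$. Given a basic neighborhood $W(f_0, K, \epsilon)$ with $K \in \kx$, applying the $k_f$-cover property to the singleton family $\{K\}$ yields $U_{\{K\}} \supseteq K$, whence $f_{\{K\}}$ vanishes on $K$ and therefore lies in $W(f_0, K, \epsilon) \cap F$. Since $\qcx$ is Fr\'echet--Urysohn, one can then extract a sequence $\{f_{\mathcal{K}_n}\}_{n \in \N} \subseteq F$ with $f_{\mathcal{K}_n} \to f_0$, and the candidate countable subfamily of $\U$ is $\xi = \{U_{\mathcal{K}_n} : n \in \N\}$.

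To verify $\lim \xi = X$, fix $z \in X$. The singleton $\{z\}$ is compact, so $W(f_0, \{z\}, 1)$ is a neighborhood of $f_0$; convergence of $\{f_{\mathcal{K}_n}\}$ to $f_0$ then forces $|f_{\mathcal{K}_n}(z)| < 1$ for all but finitely many $n$, which in view of the two-valued construction means $f_{\mathcal{K}_n}(z) = 0$, i.e.\ $z \in U_{\mathcal{K}_n}$ (in the same sense used in Theorems 3.3 and 4.1) for cofinitely many $n$. This is exactly the defining condition for $z \in \lim \xi$, and since $z \in X$ was arbitrary we conclude $\lim \xi = X$. The main obstacle I anticipate is the standard technical check of quasicontinuity at the topological boundary $\partial U_{\mathcal{K}'}$, where $f_{\mathcal{K}'}$ is not locally constant; once this is dispatched via the open set $A \cap U_{\mathcal{K}'}$ as above, every other step parallels the earlier template, with the only new ingredient being that the auxiliary functions are indexed by finite subfamilies of $\kx$ rather than individual compacta.
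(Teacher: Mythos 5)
Your high-level strategy is the same as the paper's: attach an auxiliary quasicontinuous function to each member of $\U$ that you select, use the Fr\'echet--Urysohn property to extract a convergent sequence of these functions, and read the countable subfamily $\xi$ off that sequence. The gap is in your choice of auxiliary functions. You are forced to define $f_{\mathcal{K}'}$ to be $0$ on the \emph{closure} $\overline{U_{\mathcal{K}'}}$ (otherwise quasicontinuity fails at the boundary), and consequently the final step only yields $z\in\overline{U_{\mathcal{K}_n}}$ for cofinitely many $n$, not $z\in U_{\mathcal{K}_n}$. Since $\lim\xi$ is defined by membership in the sets of $\xi$ themselves, this does not give $\lim\xi=X$; the parenthetical ``in the same sense used in Theorems 3.3 and 4.1'' is exactly where the $U$-versus-$\overline{U}$ slippage is swept under the rug, and here it is fatal rather than cosmetic. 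Concretely, take $X=\R$ (so $\qcx$ is Fr\'echet--Urysohn by Theorem 4.3) and $\U=\{\R\setminus\{q\}\colon q\in\mathbb{Q}\}$, an open $k_{f}$-cover every member of which is dense. Then every $f_{\mathcal{K}'}$ is identically $0$, so $F=\{f_{0}\}$, the extracted sequence may be constant, and your $\xi$ may be a single set $\R\setminus\{q_{0}\}$ with $\lim\xi=\R\setminus\{q_{0}\}\neq\R$. Nor can you repair this by interpolating an open $V$ with $\bigcup\mathcal{K}'\subset V\subset\overline{V}\subset U_{\mathcal{K}'}$, since that requires regularity, which is not assumed.

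The paper avoids this by dualizing the construction: its auxiliary functions equal $1$ on the chosen compacta and vanish on $X\setminus U$, so that $supp(f)\subset U$ exactly, and the sequence converges to the constant function $g_{1}\equiv 1$ rather than to $0$. Then for each $z$ the neighborhood $W(g_{1},\{z\},\epsilon)$ with $\epsilon<1$ forces $f_{n}(z)\neq 0$ eventually, hence $z\in supp(f_{n})\subset U_{n}$ --- genuine membership in the open set. If you rebuild your $f_{\mathcal{K}'}$ along these lines (the paper invokes its separation lemma for the disjoint closed sets $\bigcup\mathcal{K}'$ and $X\setminus U_{\mathcal{K}'}$), the rest of your argument goes through. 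Two smaller remarks: the degenerate case $X\in\U$ should be handled separately (take $\xi=\{X\}$), and as written your proof never uses the $k_{f}$-property beyond singleton families $\{K\}$, i.e.\ beyond $\U$ being a $k$-cover --- not an error, but a sign worth heeding that the construction is not yet exploiting the hypothesis as intended.
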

\begin{proof}
Let $\qcx$ be a Frechet-Uryshon space. If $X\in \U$, then $\xi=X$, but if not,then we assume that
\[M=\{f\in\qcx \colon supp(f)\subset U \,\text{for some}\, U\in\U\}\]
Let $g_{1}$  be a constant function such that $g_{1}(x) = 1$ for all $x \in X$. Then $supp(g_{1})=X \notin \U$, which implies $g_{1}\notin M$. Now, take $K_{1}, K_{2}, \dots K_{p} \in \kx$, where $K_{i}\cap K_{j}= \emptyset$, for all $ 1\leq i,j\leq p$. By the definition of $k_{f}$-cover, there is a $U\in \U$ such that $\{K_{1}, K_{2}, \dots K_{p}\}\subset U$. So  $X/U$ and $\bigcup_{i=1}^{p} K_{i}$ are two disjoint closed sets. Thus, by Lemma \eqref{l:quasicontinuous function separating point and closed set} there exists a quasicontinuous function such that
\[f(X/U)=\{0\} \,\text{and}\, f(\bigcup_{i=1}^{p} K_{i})=\{1\} \]
Thus, $f\in W(f_{1},K_{i},\epsilon)$ and $supp(f)=\bigcup_{i=1}^{p} K_{i}\subset U\in \U$. Then, we have $f\in W(f_{1},K_{i},\epsilon)\cap M$, which implies that  $f\in \overline{M}$. Therefore, $f\in\overline{M}/M$.

By the definition of a Frechet-Urysohn space $\qcx$, there exists a sequence $\{f_{n} \colon n \in \N\}\subset  M$ such that $f_{n}\to g_{1}$. By the selection of sequence, there exists some $U_{n}\in \U$ such that $supp(f_{n})\subset U_{n}$ for each $n\in \N$. Now, we claim that if $\xi = \{U_{n}\colon n\in \N\}$ then $\lim\, \xi=X$. For any  $x\in X$, then $W(g_{1},\{x\},\epsilon)$ be a  neighborhood of $f_{1}$, where $0< \epsilon < 1$. Thus, for $W(g_{1},\{x\},\epsilon)$ there is some number $m\in \N$  for that $f_{n}\in W(g_{1},\{x\},\epsilon)$ for all $n\geq m$, which implies $x\in supp(f_{n})\subset U_{n}$ for all $n\geq m$. Therefore, $\lim\, \xi=X$.
\end{proof}

\begin{theorem}
If $X$ is a Hausdorff space and $\qcx$ is a Whyburn space. Let $\{{\gamma}_{n}\}_ {n\in \N}$ be a sequence of open covers of $X$ having the following properties:
\begin{enumerate}
\item ${\gamma}_{n}=\{ U_{m}^{n}\colon m\in \N\}$ and  $U_{m}^{n}\subset  U_{m+1}^{n}$, for each $n\in \N$.
\item For any $n\in \N$, we have a closed cover ${\mathcal{U}}_{n}=\{ F_{m}^{n}\colon m\in \N\}$ of $X$ with $F_{m}^{n}\subset  U_{m+1}^{n}$ and $F_{m}^{n}\subset  F_{m+1}^{n}\, \forall\, m\in \N$.
\end{enumerate}
Then there exists a $k$-cover $\{W_{n}\colon n\in \N\}$ of $X$, where $W_{n}\in{\gamma}_{n}, \,\forall\, m\in \N$.
\end{theorem}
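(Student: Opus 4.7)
I adapt the strategy of Theorem~\ref{t:Qx frechet then k cover has countable sub k cover}, replacing the Frechet-Urysohn extraction of a convergent sequence with a Whyburn-style extraction of a subset $P\subset M$ whose closure gains only one prescribed limit point. The cover-selection problem is translated into a closure/accumulation problem in $\qcx$; the Whyburn property then supplies the diagonal selection.

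First, for each $(n,m)\in\N\times\N$, I invoke a variant of Lemma~\ref{l:quasicontinuous function separating finite no. of  closed set}, applied to the disjoint closed sets $F_m^n$ and $X\setminus U_{m+1}^n$, to construct a $\{0,1\}$-valued quasicontinuous function $f_{n,m}\in\qcx$ with $f_{n,m}(F_m^n)=\{1\}$ and $f_{n,m}(X\setminus U_{m+1}^n)=\{0\}$. Set $M=\{f_{n,m}:n,m\in\N\}$ and take $g_1\in\qcx$ to be the constant function with value $1$.

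Next, I verify that $g_1\in\overline{M}\setminus M$. The non-membership is immediate, since each $f_{n,m}$ vanishes on the nonempty set $X\setminus U_{m+1}^n$. For the closure direction, given any basic neighborhood $W(g_1,K,\epsilon)$ with $K\in\kx$ and $0<\epsilon<1$, I exploit the nested open cover $\gamma_n$ (compactness yields $K\subset U_{m_0}^n$ for some $m_0$), the nested closed cover $\{F_m^n\}$, and the inclusion $F_m^n\subset U_{m+1}^n$ to produce indices $(n,m)$ with $K\subset F_m^n$; the corresponding $f_{n,m}$ then equals $1$ on $K$ and lies in $W(g_1,K,\epsilon)$. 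Applying the Whyburn property of $\qcx$ to $(M,g_1)$ yields a set $P\subset M$ with $\overline{P}=P\cup\{g_1\}$.

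Finally, for each $n\in\N$ I select $m(n)\in\N$ from the fiber $\{m:f_{n,m}\in P\}$ (taking $m(n)=1$ if the fiber is empty), made large enough by the nestedness of $\gamma_n$ to dominate the relevant indices, and set $W_n=U_{m(n)+1}^n\in\gamma_n$. To check that $\{W_n\}_{n\in\N}$ is a $k$-cover, fix $K\in\kx$; since $g_1\in\overline{P}$, the neighborhood $W(g_1,K,1/2)$ meets $P$ at some $f_{n_0,m_0}$. Being $\{0,1\}$-valued and within $1/2$ of $1$ on $K$, this function satisfies $f_{n_0,m_0}(K)=\{1\}$, whence $K\subset F_{m_0}^{n_0}\subset U_{m_0+1}^{n_0}\subset U_{m(n_0)+1}^{n_0}=W_{n_0}$, the last inclusion following from the diagonal choice $m(n_0)\geq m_0$.

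The main obstacle is the middle step: rigorously justifying that every compact $K$ lies in some $F_m^n$ is nontrivial in general (a nested closed cover of $X$ need not exhaust compact subsets), so one must combine compactness with the joint structure of $\gamma_n$, $\mathcal{U}_n$, and the inclusion $F_m^n\subset U_{m+1}^n$ in the right way. A secondary technical hurdle is the per-$n$ selection of $m(n)$ when the fiber of $P$ over some $n$ is unbounded; this requires either exploiting the one-point accumulation structure forced by $\overline{P}=P\cup\{g_1\}$ to bound fibers, or modifying the construction of $M$ so that the selection stays well-defined.
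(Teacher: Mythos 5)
There is a genuine gap, and it is precisely the issue you set aside as a ``secondary technical hurdle'': bounding the fiber $\{m\colon f_{n,m}\in P\}$ for each $n$. With your construction this cannot be repaired. All of your functions are $\{0,1\}$-valued and the only accumulation point you have arranged is the constant function $g_{1}\equiv 1$, so nothing prevents the Whyburn set $P$ from being an infinite subset of a single slice $\{f_{n_{0},m}\colon m\in\N\}$ --- such a slice already accumulates only at $g_{1}$, so it is a perfectly legitimate output of the Whyburn property. In that case no finite $m(n_{0})$ dominates the indices occurring in $P$, the slices with $n\neq n_{0}$ contribute nothing, and the resulting family $\{W_{n}\}$ need not be a $k$-cover (a single $U^{n_{0}}_{m(n_{0})+1}$ would have to absorb every compact set). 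Your choice of values erases exactly the information that distinguishes the slices, and the diagonal selection collapses.

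The paper's proof is built around the one idea you are missing: the value of the separating function is made to depend on $n$. It takes $f^{n}_{m}$ equal to $\tfrac{1}{n}$ on $F^{n}_{m}$ and equal to $1$ off $U^{n}_{m}$, so that the $n$-th slice $S_{n}=\{f^{n}_{m}\colon m\in\N\}$ converges to the constant function $h_{n}\equiv\tfrac{1}{n}$, while the designated limit point is the zero function $h$. If the Whyburn set $F\subset S$ met some $S_{n}$ in an infinite set, then $h_{n}$ would lie in $\overline{F}\setminus F$, contradicting $\overline{F}=F\cup\{h\}$; hence each fiber is finite and $m(n)$ is well defined. Everything else in your outline (the translation into a closure problem, the Whyburn extraction, the final $k$-cover check via a basic neighborhood of the limit function) matches the paper. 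I add that your ``main obstacle'' --- that an increasing closed cover need not absorb compact sets, which you need for $g_{1}\in\overline{M}$ --- is a real concern, but it is equally present and equally unaddressed in the paper's own proof, which silently uses $K\subset F^{n}_{m}$ both to get $S_{n}\to h_{n}$ and to get $h\in\overline{S}$; and your use of $X\setminus U^{n}_{m+1}$ rather than $X\setminus U^{n}_{m}$ as the second closed set is actually the correct reading of hypothesis (2).
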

\begin{proof}
Let $(m,n)$ be any pair of natural numbers. Then there exists a $f_{m}^{n}\in \qcx$ such that $f_{m}^{n}|F_{m}^{n}\equiv \frac{1}{n}$ and $f_{m}^{n}|(X/U_{m}^{n})\equiv 1$. Now, take a sequence $S_{n}=\{f_{m}^{n}\colon m \in \N\}$, which converges to $h_{n}\equiv \frac{1}{n}$. Set $S =\bigcup_{n\in\N}S_{n}$ and a zero function $h$, then clearly $h$ lies in $\overline{S}$. Since $\qcx$ is a whyburn space, thus  we have  a subset $F$ of $S$ with  $\overline{F}=F\cup\{h\}$. Therefore, for any $n\in\N$, the set $F=F\cap S_{n}$ will not contain because $\overline{F}/F=\{h\}$ otherwise $h_{n}\in \overline{F}/F$. Therefore, for every $n\in\N$, we have a number $m(n)\in \N$ such that $F_{n}\subset \{f_{m}^{n}\colon m \in \N\}$. So for each $n\in\N$ take $W_{n}=\{U_{m(n)}^{n}\}$. Next, to prove $\{W_{n}\colon n\in\N\}$ is $k$-cover of $X$. Since $h\in\overline{F}$, then for each $K\in \kx$, there exists a $f_{m}^{n}\in F$ such that $f_{m}^{n}(x)<1$ for all $x\in K$. Therefore, $K\cap(X/U_{m}^{n})=\emptyset$, thus $K\subset U_{m}^{n}\subset U_{m(n)}^{n}$. Hence, $\{W_{n}\colon n\in\N\}$ is a $k$-cover of $X$.
\end{proof}
\section{Conclusion}
In this paper, we analyzed the density and various forms of tightness. We proved that while the density of $\qcx$ is smaller than the $k$-cofinality of $X$, the tightness of $\qcx$ is the same as the compact Lindelöf number of the Hausdorff space $X$. We additionally obtained requirements on $X$ such that the tightness of $\qcx$ coincides with the density-tightness. We also used countable cover kinds of $X$ to characterize fan tightness and high fan tightness for $\qcx$. Furthermore, it was demonstrated that the features of $\qcx$ being Frechet-Urysohn, possessing countable tightness, and $\sigma$-compactness of $X$ are identical for a locally compact Hausdorff space $X$. Finally, we demonstrate that if $\qcx$  is a Frechet-Urysohn space, then every $k_{f}$-open covering of $X$ has a countable subcover that converges to $X$.


\end{document}